\numberwithin{equation}{section}
\theoremstyle{plain}
	\newtheorem{theorem}{Theorem}[section]
	\newtheorem{lemma}[theorem]{Lemma}
	\newtheorem{proposition}[theorem]{Proposition}
\theoremstyle{definition}
	\newtheorem{remark}[theorem]{Remark}
\newcommand{\N}{\mathbb{N}}
\newcommand{\R}{\mathbb{R}}
\renewcommand{\phi}{\varphi}
\renewcommand{\rho}{\varrho}
\renewcommand{\theta}{\vartheta}
\DeclareMathOperator{\supp}{spt}
\DeclareMathOperator{\dist}{dist}
\DeclareMathOperator{\Divv}{div}
\newcommand{\di}{\,\mathrm{d}}
\mathchardef\ordinarycolon\mathcode`\:
\definecolor{kubin}{rgb}{0.5, 0.5, 0.8}
\definecolor{stefani}{rgb}{1.0, 0.13, 0.32}
\definecolor{saracco}{rgb}{0.7, 0.2, 0.75}
\definecolor{DarkGreen}{rgb}{0,0.5,0.1} 
\newcommand\soutD{\bgroup\markoverwith
{\textcolor{DarkGreen}{\rule[.5ex]{2pt}{1pt}}}\ULon}
\newcommand{\Hm}[1]{\leavevmode{\marginpar{\tiny%
$\hbox to 0mm{\hspace*{-0.5mm}$\leftarrow$\hss}%
\vcenter{\vrule depth 0.1mm height 0.1mm width \the\marginparwidth}%
\hbox to
0mm{\hss$\to $\hspace*{-0.5mm}}$\\\relax\raggedright #1}}}
\newcommand{\h}{\mathscr{H}}
\newcommand{\K}{\mathscr{K}}
\newcommand{\D}{\mathscr{D}}
\newcommand{\F}{\mathscr{F}}
\begin{document}

\title{On the $\Gamma$-limit of weighted fractional energies}

\author[A. Kubin]
{Andrea kubin}
\address[A. Kubin]{
Jyv\"askyl\"an Yliopisto, Matematiikan ja Tilastotieteen Laitos, Jyv\"askyl\"a, Finland
}
\email{andrea.a.kubin@jyu.fi}

\author[G.~Saracco]{Giorgio Saracco}
\address[G.~Saracco]{
Dipartimento di Matematica e Informatica, Universit\`a di Ferrara, via Machiavelli 30, 44121 Ferrara (FE), Italy
}
\email{giorgio.saracco@unife.it}

\author[G.~Stefani]{Giorgio Stefani}
\address[G.~Stefani]{
Dipartimento di Matematica, Universit\`a di Padova, via Trieste 63, 35131 Padova (PD), Italy
}
\email{giorgio.stefani@unipd.it}

\keywords{$\Gamma$-convergence, fractional gradient flows, Gagliardo seminorms, parabolic flows, weighted spaces}

\subjclass[2020]{Primary 49J45. Secondary 35R11, 35B40, 45E10, 26A33}

\thanks{\textit{Acknowledgements}. 
The second-named and third-named authors are members of the Istituto Nazionale di Alta Matematica (INdAM), Gruppo Nazionale per l'Analisi Matematica, la Probabilit\`a e le loro Applicazioni (GNAMPA).  
The first-named author is supported by the Academy of Finland (grant agreement No.\ 314227).
The second-named author has received funding from INdAM under the INdAM--GNAMPA Project 2025 \textit{Disuguaglianze funzionali di tipo geometrico e spettrale} (grant agreement No.\ CUP\_E53\-240\-019\-500\-01).
The third-named author has received funding from INdAM under the INdAM--GNAMPA Project 2025 \textit{Metodi variazionali per problemi dipendenti da operatori frazionari isotropi e anisotropi} (grant agreement No.\ CUP\_E53\-240\-019\-500\-01) and from the European Union -- NextGenerationEU and the University of Padua under the 2023 STARS@UNIPD  Starting Grant Project \textit{New Directions in Fractional Calculus -- NewFrac} (grant agreement No.\ CUP\_C95\-F21\-009\-990\-001).
}
	
\begin{abstract}
Given $p\in[1,\infty)$ and a bounded open set $\Omega\subset\R^d$ with Lipschitz boundary, we study the $\Gamma$-convergence of the weighted fractional seminorm 
\begin{equation*}
[u]_{s,p,f}^p
=
\int_{\R^d} \int_{\R^d} \frac{|\tilde u(x)- \tilde u(y)|^p}{\|x-y\|^{d+sp}}\,f(x)\,f(y)\di x\di y,
\end{equation*}
as $s\to1^-$ for $u\in L^p(\Omega)$, where $\tilde u=u$ on $\Omega$ and $\tilde u=0$ on $\R^d\setminus\Omega$.
Assuming that $(f_s)_{s\in(0,1)}\subset L^\infty(\R^d;[0,\infty))$ and $f\in\mathrm{Lip}_b(\R^d;(0,\infty))$ are such that $f_s\to f$ in $L^\infty(\R^d)$ as $s\to1^-$, we show that $(1-s)[u]_{s,p,f_s}^p$ $\Gamma$-converges to the Dirichlet $p$-energy weighted by $f^2$.
In the case $p=2$, we also prove the convergence of the corresponding gradient flows. 
\end{abstract}
	

 \hspace{-2cm}
 {
 \begin{minipage}[t]{0.7\linewidth}
 \begin{scriptsize}
 \vspace{-2cm}
This is a pre-print of an article published in \emph{Proc.\ Roy.\ Soc.\ Edinburgh Sect.\ A}. The final authenticated version is available online at: \href{https://doi.org/10.1017/prm.2025.10100}{https://doi.org/10.1017/prm.2025.10100}
 \end{scriptsize}
\end{minipage} 
}

\maketitle


\section{Introduction}

\subsection{Framework}

We let $d \in \N$, $s \in (0,1)$, and $p\in[1,\infty)$. 
Given a nonnegative \emph{weight} $f \in L^\infty(\R^d;[0,\infty))$, our aim is to study the $\Gamma$-convergence as $s\to1^-$ of the non-ho\-mo\-ge\-neous (or \emph{weighted} by $f$) $s$-fractional $p$-seminorm
\begin{equation}
\label{eq:funct-intro}
[u]_{s,p,f}^p 
= 
\int_{\R^d} \int_{\R^d} \frac{| u(x)- u(y)|^p}{\|x-y\|^{d+sp}}\, f(x)\,f(y)\di x\di y,
\end{equation}
for $u\in L^p(\R^d)$.

The convergence as $s\to1^-$ of~\eqref{eq:funct-intro} in the case $f\equiv 1$---for which we use the shorthand $[\,\cdot\,]_{s,p}^p=[\,\cdot\,]_{s,p,1}^p$---has been deeply studied in recent years, both in the pointwise and in the $\Gamma$-sense.
Since the literature is very vast, here we limit ourselves to a non-comprehensive list of results which are closer to the spirit of the present work.

The pointwise limit of the seminorm $[\,\cdot\,]^p_{s,p}$ as $s\to1^-$ is a notable instance of the celebrated \emph{Bourgain--Brezis--Mironescu} (BBM, for short) \emph{formula}~\cites{BBM01,Dav02}, yielding that $(1-s)[\,\cdot\,]_{s,p}$ converges to the Dirichlet $p$-energy up to a multiplicative constant.
After the seminal contributions~\cites{BBM01,Dav02}, the BBM formula has been extensively studied in several directions, see~\cites{Pon04a,Pon04b,DDG24} for more general results and~\cites{LS11,LS14} for extensions to arbitrary domains.
We also refer to~\cites{Fan25,Lud14-S,Lud14-p} for anisotropic fractional energies and to~\cites{DDP24,GS25} for \textit{sharp} conditions for the validity of the BBM formula. 

The $\Gamma$-convergence of $(1-s)[\,\cdot\,]_{s,p}^p$ to the Dirichlet $p$-energy as $s\to1^-$ has been established in~\cite{BPS16} for every $p\in(1,\infty)$, in~\cites{CdLKNP23} only for $p=2$, and in~\cite{GS25} for every $p\in[1,\infty)$.
We also refer to~\cites{BBM01,Pon04a} for similar results on bounded open sets.

The geometric case $p=1$ deserves special mention, due to the link with the (relative) fractional perimeter, see~\cites{AdPM01,P20,BP19,KlM24,GS25,Lom19} for closely related results in this direction.
We also refer to~\cites{CN20,dLKP22,KPT2024} for higher-order convergence results.

Beyond the case $f\equiv1$, the asymptotic behavior of~\eqref{eq:funct-intro} and of similarly-defined energies has been studied for some particular weights, see~\cites{CCLP23,DL21} for the \emph{Gaussian} framework and~\cite{K24} for weights depending on negative powers of the distance from the boundary.

The aim of the present paper is to investigate the asymptotic behavior of the weighted seminorm~\eqref{eq:funct-intro} as $s\to1^-$ as the weight $f$ is also allowed to vary with respect to the parameter~$s$.
This is motivated by the recent interest in the extension of BBM-type formulas beyond the isotropic setting in order to address possible applications to non-isotropic frameworks~\cite{DDP24}.
Besides, our $\Gamma$-convergence result can be interpreted as a suitable extension to the weighted setting of the ones obtained in~\cites{CdLKNP23,BPS16}.

Our main result, \cref{thmGammaconvs1stab} below, deals with the $\Gamma$-convergence of the energy~\eqref{eq:funct-intro} with respect to a uniformly converging family of weights $(f_n)_{n\in\N}$ in $L^\infty(\R^d;[0,\infty))$, whose limit $f$ is in $\mathrm{Lip}_b(\R^d;(0,\infty))$. 
Precisely, we prove that the $\Gamma$-limit is given by 
\begin{equation*}
u \mapsto 
\begin{cases}
\displaystyle
K_{d,p}
\|\nabla u\|_{p,f^2}^p
=
K_{d,p}
\int_{\Omega} f^2\,\| \nabla u\|^p \di x, 
&
\text{for}\ p \in (1,\infty),
\\[3ex]
\displaystyle
K_{d,1}
\|Du\|_{1,f^2}
=
K_{d,1}
\int_{\Omega} f^2\di|Du|,
&
\text{for}\ p=1,
\end{cases}
\end{equation*}
where, 
for every $p\in[1,\infty)$ (and here $\Gamma$ being Euler's \emph{Gamma function}), 
\begin{equation}
\label{eq:def_Kdp}
K_{d,p}
=
\frac{1}{p}\int_{\partial B_1} \vert x \cdot \mathrm{e}_d \vert^p\, \mathrm{d} \mathcal{H}^{d-1}(x)
=
\frac{2\pi^{\frac{d-1}2}}{p}
\frac{\Gamma\left(\frac{p+1}2\right)}{\Gamma\left(\frac{N+p}2\right)},
\end{equation}
see~\cite{BSZ26}*{Lem.~2.1}.
Here and below, given a measurable function $u\colon\Omega\to\R$ on an open set $\Omega\subset\R^d$, we define $\tilde u\colon\R^d\to\R$ be such that $\tilde u=u$ on $\Omega$ and $\tilde u=0$ on $\R^d\setminus\Omega$. 

\begin{theorem}[$\Gamma$-convergence with weights]
\label{thmGammaconvs1stab}
Let $p\in[1,\infty)$, $(f_n)_{n\in\N}\subset L^\infty(\R^d;[0,\infty))$ and $f\in\mathrm{Lip}_b(\R^d;(0,\infty))$ be such that $f_n\to f$ in $L^\infty(\R^d)$, $\Omega\subset \mathbb{R}^d$ be a bounded open set with Lipschitz boundary and $(s_n)_{n\in\N}\subset (0,1)$ be such that $s_n \to 1^-$.
\begin{enumerate}[label=(\roman*)]
\item
\label{item:compactnessstab}
\emph{(Compactness)} 
If $(u^n)_{n \in \N}\subset L^p(\Omega)$ is such that
\begin{equation}
\label{eq:equibddstab}
\sup_{n \in \N}\left( (1-s_n)[\tilde u^n]^p_{s_n,p,f_n}+ \| u^n \|_{L^p(\Omega)}^p\right)<\infty,
\end{equation} 
then, up to a subsequence, $u^n\to u$ in $L^p(\Omega)$ for some $u\in W^{1,p}_0(\Omega)$ if $p>1$ or $u \in BV(\Omega)$ if $p=1$.
\item
\label{item:G-liminfstab}
\emph{($\Gamma$-liminf inequality)}
If $(u^n)_{n\in\N}\subset L^p(\Omega)$ is such that $u^n\to u$ in $L^p(\Omega)$ for some $u\in W^{1,p}_0(\Omega)$ if $p>1$, or $u\in BV(\Omega)$ if $p=1$, then
\begin{equation}
\label{linfsto1stab}
\begin{split}
&K_{d,p} \| \nabla u  \|^p_{p,f^2}
\le
\liminf_{n\to \infty}(1-s_n) [\tilde u^n]^p_{s_n,p,f_n} \quad \text{for $p>1$},\\
& K_{d,1} \| D u  \|_{1,f^2}
\le
\liminf_{n\to \infty}(1-s_n) [\tilde u^n]_{s_n, 1, f_n} \quad \text{for $p=1$}.
\end{split}
\end{equation}
\item
\label{item:G-limsupstab}
\emph{($\Gamma$-limsup inequality)} 
If $u\in W^{1,p}_0(\Omega)$ if $p>1$, or $u\in BV(\Omega)$ if $p=1$, then there exists $(u^n)_{n\in\N}\subset L^p(\Omega)$ such that $u^n\to u$ in $L^p(\Omega)$ and
\begin{equation}
\label{lsupsto1stab}
\begin{split}
&K_{d,p} \| \nabla u  \|^p_{p,f^2}
=
\lim_{n\to \infty} (1-s_n)[\tilde u^n]^p_{s_n,p,f_n} \quad \text{for $p>1$},\\
&K_{d,1} \| D u  \|_{1,f^2}
=
\lim_{n\to \infty} (1-s_n)[\tilde u^n]_{s_n, 1, f_n} \quad \text{for $p =1$}.
\end{split}
\end{equation}
\end{enumerate}
\end{theorem}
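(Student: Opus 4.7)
The plan is to use an $L^\infty$-stability argument to reduce the three parts of the theorem to the case of the fixed limit weight $f$. Since $f_n \to f$ in $L^\infty(\R^d)$,
\[
\|f_n \otimes f_n - f \otimes f\|_{L^\infty(\R^{2d})} \leq (\|f_n\|_\infty + \|f\|_\infty)\|f_n - f\|_\infty \to 0,
\]
so for any sequence $(u^n)_{n\in\N}$ with $\sup_n (1-s_n)[\tilde u^n]^p_{s_n,p} < \infty$, one has $\lim_n (1-s_n)\bigl|[\tilde u^n]^p_{s_n,p,f_n} - [\tilde u^n]^p_{s_n,p,f}\bigr| = 0$. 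Since $f \geq m > 0$ for some constant $m$, for $n$ large $f_n \geq m/2$ and therefore $[\tilde u^n]^p_{s_n,p,f_n} \geq (m/2)^2[\tilde u^n]^p_{s_n,p}$, ensuring the above stability is available in every regime of interest. Every part of the theorem thus reduces to its fixed-weight counterpart.

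Part~(i) then follows immediately: the bound~\eqref{eq:equibddstab} yields $\sup_n(1-s_n)[\tilde u^n]^p_{s_n,p} < \infty$, and the classical BBM-type compactness (e.g.~\cite{Pon04a}) produces a subsequential $L^p$-limit $u \in W^{1,p}_0(\Omega)$ (resp.~$BV_0(\Omega)$ if $p=1$), with the zero trace inherited from the zero extension. For the $\Gamma$-limsup in~(iii), I would first take the constant recovery sequence $u^n \equiv u$ for $u \in C_c^\infty(\Omega)$: Taylor-expanding $u$ and exploiting that $f(x)f(x+h) \to f(x)^2$ uniformly on the support of $u$ as $h \to 0$, a standard Dávila-type dominated-convergence computation yields
\[
\lim_{n\to\infty}(1-s_n)[u]^p_{s_n,p,f} = K_{d,p}\int_\Omega f^2\|\nabla u\|^p\,\de x,
\]
and the stability transfers this to $f_n$. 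A $C_c^\infty(\Omega)$-density argument in $W^{1,p}_0(\Omega)$ (resp.\ strict $BV_0$-approximation if $p=1$) together with a diagonal extraction extends the recovery sequence to general $u$.

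The main obstacle is the $\Gamma$-liminf in~(ii). After stability, the target is $K_{d,p}\int_\Omega f^2\|\nabla u\|^p\,\de x \leq \liminf_n (1-s_n)[\tilde u^n]^p_{s_n,p,f}$. Fix $\delta > 0$ and split along $|x-y| = \delta$: the far part is dominated by $C\|f\|_\infty^2 \delta^{-d-s_np}\|u^n\|^p_{L^p}$ and vanishes after multiplication by $(1-s_n)$, while on the near-diagonal strip the Lipschitz bound $|f(y)-f(x)| \leq L|x-y|$ gives $f(x)f(y) \geq f(x)^2 - L\|f\|_\infty \delta$, producing an error of order $\delta \cdot (1-s_n)[\tilde u^n]^p_{s_n,p} = O(\delta)$ by stability. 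For the remaining integrand with asymmetric weight $f(x)^2$, I would localize: cover a bounded thickening of $\Omega$ by disjoint cubes $(Q_k)$ of side length $\eta \ll \delta$ on which $f^2$ oscillates by at most $L\eta$, drop the non-negative cross-cube contributions, replace $f^2$ by its infimum $c_k := \inf_{Q_k} f^2$ on each $Q_k$ at a further $O(\eta)$ cost, and apply the unweighted $\Gamma$-liminf of~\cite{CdLKNP23} (for $p=2$) or~\cite{GS25} (for general $p$) cube by cube. Summing over $k$ and letting $\eta, \delta \to 0^+$ in that order concludes the argument; the only delicate points are the bookkeeping of the localization errors and the treatment of cubes crossing $\partial\Omega$, handled routinely via the zero-trace condition on the limit $u$ together with the $L^p$-convergence $u^n \to u$.
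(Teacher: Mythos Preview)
Your strategy coincides with the paper's: reduce $f_n$ to the fixed weight $f$ by $L^\infty$-stability (the paper's Lemmas~3.7--3.8), derive compactness from the unweighted bound, obtain the limsup on $C_c^\infty(\Omega)$ by a direct Taylor computation and pass to general $u$ by density and diagonalization, and prove the liminf by localizing to small cubes and freezing the weight to its infimum on each cube. The only execution difference in~(ii) is that you invoke the unweighted local $\Gamma$-liminf on each cube as a black box, while the paper reproves that cube estimate from scratch via mollification and a first-order Taylor expansion; the localize-and-freeze skeleton is identical. (Your detour through the asymmetric weight $f(x)^2$ is also unnecessary: once you restrict to $Q_k\times Q_k$ you already have $f(x)f(y)\ge(\inf_{Q_k}f)^2=c_k$, which is exactly the paper's bound $f(x)f(y)\ge(f_i^\gamma)^2$.)

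There is one concrete gap in your stability step. You assert $f\ge m>0$ on all of $\R^d$ and conclude $[\tilde u^n]^p_{s_n,p,f_n}\ge(m/2)^2[\tilde u^n]^p_{s_n,p}$. But the hypothesis $f\in\mathrm{Lip}_b(\R^d;(0,\infty))$ only gives $f(x)>0$ pointwise; $\inf_{\R^d}f$ may vanish (take $f(x)=(1+\|x\|)^{-1}$), and the integrand in $[\tilde u^n]_{s_n,p,f_n}$ lives on pairs $(x,y)$ with one coordinate in $\Omega$ and the other ranging over all of $\R^d$, so no uniform lower bound on $f_n(x)f_n(y)$ is available. The paper repairs this in its Lemma~3.8 by splitting at $\|x-y\|=1$: the far part is controlled by $C\|u^n\|_p^p$ with no weight needed, while on the near part both $x$ and $y$ lie in the bounded thickening $\Omega_1$, where $\inf_{\Omega_1}f>0$ by continuity on a compact set. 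Inserting this split into your argument closes the gap.
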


\subsection{Convergence of flows}

In the case $p=2$, the $\Gamma$-convergence result obtained in \cref{thmGammaconvs1stab} can be complemented with a stability result for the corresponding parabolic flows associated to the energies, see \cref{11genstab} below.  
Here and below, given a weight $f\in L^\infty(\R^d;[0,\infty))$, we define the weighted Laplacian of $u \in H^1_0(\Omega)$ as 
\begin{equation}
\label{def:D1f}
(-\mathfrak{D})^{f}u 
= 
-2\Divv(f^2\nabla u)
\end{equation}
in the distributional sense in duality with $C^\infty_c(\Omega)$ functions.
Moreover, given $u\in L^2(\Omega)$ such that $[u]_{s,2,f}<\infty$, we define the weighted fractional $s$-Laplacian of $u$ as
\begin{equation}
\label{def:Dsf}
(-\mathfrak{D})^{s,f}u (x)
=
4f(x)\,\mathrm{p.v.}\int_{\R^d}\frac{\tilde u(x)-\tilde u(y)}{\|x-y\|^{d+2s}}\,f(y)\,\mathrm{d} y
\end{equation}  
again in the distributional sense in duality with $C^\infty_c(\Omega)$ functions.
Note that, in the unweighted case $f\equiv1$, up to a multiplicative constant, the operators~\eqref{def:D1f} and~\eqref{def:Dsf} become the usual Laplacian and fractional $s$-Laplacian operators, respectively.

\begin{theorem}[Stability of parabolic flows]
\label{11genstab}
Let $(f_n)_{n \in \N}$, $f$, $\Omega$, and $(s_n)_{n\in\N}$ be as in \cref{thmGammaconvs1stab}.  
If $(u^n_0)_{n\in\N}\subset L^2(\Omega)$ is such that $u^n_0\to u^\infty_0$ in $L^2(\Omega)$ for some function $u^\infty_0\in L^2(\Omega)$, $[\tilde{u}^n_{0}]_{s_n,2,f_n}<\infty$ for every $n\in\N$, and 
\begin{equation*}
\sup_{n\in\N}
(1-s_n)[\tilde{u}^n_0]_{s_n,2,f_n}^2
<\infty,
\end{equation*}
then the following hold:
\begin{enumerate}[label=(\roman*),itemsep=1ex]

\item 
$u^\infty_0\in H^1_0(\Omega)$;

\item
for every $T>0$ and for every $n\in\N$, the problem
\[
\begin{dcases}
 \dot u(t) =(1-s_n)(-\mathfrak{D})^{s_n,f_n}u(t),  \quad\textrm{for a.e.\ }t\in(0,T), \\[1ex]
u(0)=u^n_0,
\end{dcases}
\]
admits a unique solution $u_n\in H^1([0,T];  L^2(\Omega))$ such that
\begin{equation*}
(-\mathfrak{D})^{s_n,f_n}u_n(t)\in L^2(\Omega)
\quad
\text{for a.e.}\ t\in(0,T);
\end{equation*} 
\item
the problem
\[
\begin{dcases}
\dot u(t) =K_{d,2}(-\mathfrak{D})^{f} u(t),  \quad\text{for a.e.}\ t\in[0,\infty),\\[1ex]
u(0)=u^\infty_0,
\end{dcases}
\]
admits a unique solution $u_\infty\in H^1([0,T];H^1_0(\Omega))$;

\item
$(u_n)_{n\in\N}$ weakly converges to $u_\infty$ in $H^1([0,T];L^2(\Omega))$.
\end{enumerate}
Moreover, if
\begin{equation*}
\lim_{n\to \infty} (1-s_n)[\tilde{u}^n_0]^2_{s_n,2,f_n}
= 
K_{d,2}\| \nabla u^\infty_0\|^2_{2,f^2},
\end{equation*} 
then $(u_n)_{n\in\N}$ strongly converges to $u_\infty$ in $H^1([0,T];L^2(\Omega))$ and also
\[
u_n(t) \xrightarrow{L^2} u_\infty(t)
\quad 
\text{and}
\quad 
(1-s_n)[\tilde{u}_n(t)]_{s_n,2,f_n}\to K_{d,2}\|\nabla u_\infty(t)\|_{2, f^2}
\quad
\text{for every}\ t\in[0,T].
\]
\end{theorem}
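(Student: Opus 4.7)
The plan is to handle parts (i)--(iii) as standard consequences of Hilbertian gradient-flow theory, and to prove (iv) together with the strong-convergence addendum via a Sandier--Serfaty-type principle for $\Gamma$-converging quadratic energies on the fixed ambient space $L^2(\Omega)$.

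\textbf{Parts (i)--(iii).} Point (i) is immediate from the compactness part of \cref{thmGammaconvs1stab}: the $L^2$-convergence $u_0^n\to u_0^\infty$ forces $L^2$-boundedness, and together with the uniform bound on $(1-s_n)[\tilde u_0^n]^2_{s_n,2,f_n}$ this yields $u_0^\infty\in H^1_0(\Omega)$. For (ii) and (iii), the functionals
\[
F_n(u):=(1-s_n)[\tilde u]_{s_n,2,f_n}^2
\quad\text{and}\quad
F_\infty(u):=K_{d,2}\|\nabla u\|_{2,f^2}^2,
\]
extended by $+\infty$ outside their natural domains, are proper, convex, lower semicontinuous quadratic forms on $L^2(\Omega)$ with dense domain. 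A direct integration-by-parts against $C^\infty_c(\Omega)$ test functions, exploiting $f\in\mathrm{Lip}_b(\R^d;(0,\infty))$ and the $L^\infty$-bounds on $f_n$, identifies their subdifferentials with suitable multiples of the operators $(-\mathfrak{D})^{s_n,f_n}$ and $(-\mathfrak{D})^{f}$ defined in~\eqref{def:D1f}--\eqref{def:Dsf}. The classical Brezis--Komura existence theory for gradient flows of convex, proper, lsc functionals on Hilbert spaces then provides, for each initial datum in the finiteness domain of the corresponding functional, a unique strong solution in $H^1([0,T];L^2(\Omega))$ satisfying the energy-dissipation identity
\[
F(u(t))+2\int_0^t\|\dot u(\tau)\|^2_{L^2(\Omega)}\di\tau = F(u(0)), \qquad t\in[0,T];
\]
the higher regularity $u_\infty\in H^1([0,T];H^1_0(\Omega))$ stated in (iii) is then a consequence of the smoothing effect of the linear parabolic equation associated with $(-\mathfrak{D})^{f}$.

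\textbf{Part (iv).} The energy identity for $F_n$, combined with the hypothesis $\sup_n F_n(u_0^n)<\infty$, gives uniform bounds for $\dot u_n$ in $L^2((0,T);L^2(\Omega))$ and for $F_n(u_n(t))$ in $t\in[0,T]$. By the compactness statement in \cref{thmGammaconvs1stab}, for each $t$ the family $\{u_n(t)\}_n$ is relatively compact in $L^2(\Omega)$ with limit points in $H^1_0(\Omega)$; together with the equicontinuity in $t$ induced by the uniform $L^2$-bound on $\dot u_n$, an Ascoli--Arzel\`a/diagonal extraction produces a subsequence along which $u_n\to u$ in $C([0,T];L^2(\Omega))$, $\dot u_n\weak\dot u$ in $L^2((0,T);L^2(\Omega))$, and $u(0)=u_0^\infty$. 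The $\Gamma$-$\liminf$ inequality of \cref{thmGammaconvs1stab}, applied at every fixed $t$, together with Fatou's lemma and the lsc of the $L^2$-norm under weak convergence, then yields
\[
F_\infty(u(t))+2\int_0^t\|\dot u(\tau)\|^2_{L^2}\di\tau \;\leq\; \liminf_{n\to\infty} F_n(u_0^n),
\]
which is exactly the De Giorgi energy-dissipation inequality characterizing the unique gradient flow of $F_\infty$ starting from $u_0^\infty$. The uniqueness asserted in (iii) then forces $u=u_\infty$ and promotes the subsequence convergence to convergence of the full sequence.

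\textbf{Strong convergence and main obstacle.} Under the additional assumption $F_n(u_0^n)\to F_\infty(u_0^\infty)$, the displayed inequality must collapse to an equality in the limit, forcing both $F_n(u_n(t))\to F_\infty(u_\infty(t))$ for every $t\in[0,T]$ and $\|\dot u_n\|_{L^2(L^2)}\to\|\dot u_\infty\|_{L^2(L^2)}$. Together with the already-established weak convergence of $\dot u_n$, the latter upgrades to strong convergence in $L^2((0,T);L^2(\Omega))$, and hence to strong convergence $u_n\to u_\infty$ in $H^1([0,T];L^2(\Omega))$; the former, combined with the $\Gamma$-$\liminf$ inequality at fixed $t$, delivers the pointwise-in-$t$ convergence of the weighted fractional seminorms. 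I expect the main obstacle to lie precisely in the De Giorgi EDI step of part (iv): concretely, verifying that the $\Gamma$-$\liminf$ of the static energies together with the weak-$L^2$ lower semicontinuity of the velocity norm is enough to transfer the $F_n$-EDI into the $F_\infty$-EDI at every time, without invoking a pointwise recovery-sequence construction for $\dot u_n$. Once this variational characterization is secured, the remainder reduces to a routine Hilbertian energy argument.
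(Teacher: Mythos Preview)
The paper's proof is much shorter and takes a different route: it invokes an off-the-shelf abstract stability theorem for Hilbertian gradient flows (\cref{genstab}, taken from~\cite{CdLKNP23}*{Thm.~3.8}), verifies its two hypotheses---$\Gamma$-convergence and equicoercivity of the functionals $\F_n(u)=(1-s_n)[\tilde u]^2_{s_n,2,f_n}$---directly from \cref{thmGammaconvs1stab}, and identifies the subdifferentials with the operators $(-\mathfrak{D})^{s_n,f_n}$ and $(-\mathfrak{D})^f$ via the first-variation computation in \cref{lemma:firstvar} combined with \cref{singleton}. All four conclusions, including the weak convergence (iv) and the strong-convergence addendum, are then read off from the abstract theorem.

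Your direct Sandier--Serfaty-type argument for (iv) has a genuine gap. The inequality you derive,
\[
F_\infty(u(t))+c\int_0^t\|\dot u(\tau)\|^2_{L^2}\di\tau \;\leq\; \liminf_{n\to\infty} F_n(u_0^n),
\]
does \emph{not} characterize the gradient flow of $F_\infty$ from $u_0^\infty$: the constant curve $u(t)\equiv u_0^\infty$ satisfies it trivially (by the $\Gamma$-$\liminf$ inequality applied at $t=0$), so it cannot force $u=u_\infty$. The De Giorgi EDI that \emph{does} characterize gradient flows reads
\[
F_\infty(u(t))+\tfrac12\int_0^t\|\dot u\|^2+\tfrac12\int_0^t|\partial F_\infty|^2(u)\;\leq\; F_\infty(u_0^\infty),
\]
and to obtain it in the limit you need both the Sandier--Serfaty lower semicontinuity of slopes, $|\partial F_\infty|(u(t))\le\liminf_n|\partial F_n|(u_n(t))$, and the right-hand side $F_\infty(u_0^\infty)$ rather than $\liminf_n F_n(u_0^n)$; neither is supplied by $\Gamma$-convergence of the energies alone, and the second is not assumed in (iv). Your ``main obstacle'' paragraph locates the right spot but understates the difficulty: $\Gamma$-$\liminf$ of energies plus weak lower semicontinuity of the velocity norm is genuinely insufficient here. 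The abstract theorem the paper invokes sidesteps this by relying on a characterization of convex Hilbertian gradient flows (e.g., via minimizing movements or the EVI) for which $\Gamma$-convergence and compactness alone do suffice.
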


\subsection{Organization of the paper}

The paper is organized as follows. The notation and some useful preliminary results are detailed in \cref{sec:prelim}. The proof of \cref{thmGammaconvs1stab} is given in \cref{sec:stab}, while that of \cref{11genstab} can be found in \cref{sec:flow}.

\section{Preliminaries}
\label{sec:prelim}

\subsection{Notation}
We briefly detail the main notation used throughout the paper.

The symbol $C(*,\cdots,*)$ indicates a  generic positive constant that depends on $*,\cdots,*$ only and
may change from line to line.

We let $d\in\N$ and work in the $d$-dimensional Euclidean space $\R^d$.
We let $x\cdot y$ be the Euclidean inner product between $x,y\in\R^d$ and $\|x\|$ be the Euclidean norm of $x$.

We let $B_r(x)$ be the open ball in $\R^d$	of center $x\in\R^d$ and radius $r>0$, and we use the shorthand $B_r= B_r(0)$.
Given an open set $A\subset\R^d$, we let $A^c=\R^d\setminus A$ be the complement of $A$, $\partial A$ be the topological boundary of $A$ and, for every $t >0$,
\begin{equation}
\label{ingrass}
A_{t}= \{ x \in \mathbb{R}^d: \; \dist(x;A)<t\}.
\end{equation} 
Throughout the paper, we let $\Omega\subset \R^d$ be a bounded open set with Lipschitz boundary.

We let $\mathcal{L}^d$ be the $d$-dimensional Lebesgue measure and $\mathcal{H}^\alpha$ be the $\alpha$-dimensional Hausdorff measure for every  $\alpha\in[0,d]$.
We set $\omega_d=\mathcal{L}^d(B_1)$, so that $\mathcal H^{d-1}(\partial B_1)=d\,\omega_d$.
Throughout the paper, all functions and sets are tacitly assumed to be  $\mathcal L^d$-measurable.

Let $p\in[1,\infty)$ and $f \in L^\infty(\R^d;[0,\infty))$.
Given $m\in\N$ and $v\colon\Omega \to \R^m$, we let
\begin{equation}
\label{eq:Lpf}
\|v\|_{p,f}
= 
\left(   \int_{\Omega} \| v(x)\|^p \,f(x)\,\mathrm{d}x \right)^{\frac{1}{p}}\in[0,\infty],
\end{equation}
and we use the shorthand $\|v\|_p=\|v\|_{p,1}$.
We thus let
\begin{equation*}
[L^p_f(\Omega)]^m = \left\{v\colon \Omega \to \R^m : \|v\|_{p,f}<\infty \right\}.
\end{equation*}
When $m=1$, we simply write $L^p_f(\Omega)$.
We point out that if additionally $f$ takes values in $(0,\infty)$, under our standing assumptions on $f$ and $\Omega$, the spaces $L^p(\Omega)$ and $L^p_f(\Omega)$ are equivalent, with
\begin{equation}\label{eq:Lp-embedding}
(\operatorname{ess\,inf}_\Omega f)\,\|v\|_{p}^p 
\le 
\|v\|^p_{p,f} 
\le 
\|f\|_\infty \|v\|^p_{p}.
\end{equation}

Given $u\colon\Omega\to\R$, we define $\tilde u\colon\R^d\to\R$  by letting $\tilde u=u$ in $\Omega$ and $\tilde u=0$ in $\R^d\setminus\Omega$. 
Thus, given $s\in(0,1)$, we define 
\begin{equation}
\label{eq:spf_seminorm}
[u]_{s,p,f} 
= 
\left( \int_{\mathbb{R}^d} \int_{\mathbb{R}^d} \frac{|\tilde u(x)-\tilde u(y)|^p}{\|x-y\|^{d+sp}}\, f(x)\,f(y)\,\mathrm{d}x\,\mathrm{d}y \right)^{\frac{1}{p}}
\end{equation}
for every $u\colon\Omega\to\R$ and we use the shorthand $[u]_{s,p}=[u]_{s,p,1}$.

Finally, we let $W^{1,p}_0(\Omega)$ for $p>1$, be the closure of $C^\infty_c(\Omega)$ functions with respect to the Sobolev $p$-norm
$u\mapsto\|u\|_p^p+\int_{\R^d}\|\nabla u\|_{p}^p\di x$, while $BV(\Omega)$ the weak$^*$ closure of $C^\infty_c(\Omega)$ with respect to the Sobolev $1$-norm. 
We also set
\begin{equation}
\label{eq:semiBVg}
\|D u\|_{1,f}
= 
\int_{\R^d} f\di|Du|,
\end{equation}
whenever $u \in BV(\Omega)$ and $f\in L^\infty(\R^d;(0,\infty))$.

\subsection{Compactness and characterization}

We recall the following well-known compactness result, see~\cite{Bre11book}*{Thm.~4.26} for example.
Here and below, we let $
\tau_h w (\,\cdot\,)= w(\,\cdot\,+h)$ for every  $h\in \mathbb{R}^d$ and $w\in L^p(\mathbb{R}^d)$.

\begin{theorem}
\label{frechetkolmthm}
Let $p\in[1,\infty)$.
If $(v^n)_{n \in \N}\subset L^p(\Omega)$ is such that 
\begin{equation*}
\sup_{n\in\N}\|v^n\|_p<\infty
\quad
\text{and}
\quad
\lim_{h \to  0} 
\sup_{n \in \N} 
\| \tau_{h}\tilde v^n-\tilde v^n \|_{L^p(\mathbb{R}^d)}
=
0,
\end{equation*}
then, up to a subsequence, $v^n\to v$ in $L^p(\Omega)$ for some $v\in L^p(\Omega)$. 
\end{theorem}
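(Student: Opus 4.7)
The plan is to prove this via the classical mollification plus Arzelà--Ascoli scheme. Fix once and for all a ball $B_R\supset\overline{\Omega}$, a standard nonnegative symmetric mollifier $\rho\in C^\infty_c(B_1)$ with unit mass, and set $\rho_\eps(x)=\eps^{-d}\rho(x/\eps)$ and $v^n_\eps=\rho_\eps*\tilde v^n$. The proof then splits into three steps.

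\emph{Step 1 (uniform approximation).} Writing the convolution as a weighted average of translates and applying Minkowski's integral inequality, I would estimate
\begin{equation*}
\|v^n_\eps-\tilde v^n\|_{L^p(\R^d)}
\le
\int_{B_\eps}\rho_\eps(h)\,\|\tau_{-h}\tilde v^n-\tilde v^n\|_{L^p(\R^d)}\di h
\le
\sup_{\|h\|\le\eps}\|\tau_h\tilde v^n-\tilde v^n\|_{L^p(\R^d)}.
\end{equation*}
By the equicontinuity-in-$L^p$ assumption, the right-hand side tends to $0$ as $\eps\to0^+$, uniformly in $n$.

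\emph{Step 2 (compactness for fixed $\eps$).} For each fixed $\eps>0$, Young's inequality gives the bounds $\|v^n_\eps\|_\infty\le\|\rho_\eps\|_{p'}\sup_n\|v^n\|_p$ and $\|\nabla v^n_\eps\|_\infty\le\|\nabla\rho_\eps\|_{p'}\sup_n\|v^n\|_p$, so the family $(v^n_\eps)_{n\in\N}$ is uniformly bounded and equi-Lipschitz on $\R^d$, with supports contained in the fixed compact set $\overline{B_{R+1}}$ for $\eps<1$. The Arzelà--Ascoli theorem therefore yields a subsequence converging uniformly on $\overline{B_{R+1}}$, and hence in $L^p(\R^d)$.

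\emph{Step 3 (diagonal extraction).} Picking a sequence $\eps_k\to0^+$, a standard diagonal procedure produces a single subsequence $(v^{n_j})_{j\in\N}$ such that $(v^{n_j}_{\eps_k})_{j\in\N}$ converges in $L^p(\R^d)$ for every $k$. Given $\delta>0$, Step 1 selects $k$ so that $\|v^{n_j}_{\eps_k}-\tilde v^{n_j}\|_{L^p(\R^d)}<\delta$ for all $j$, and then the $L^p$-Cauchy property of $(v^{n_j}_{\eps_k})_j$ together with the triangle inequality shows that $(\tilde v^{n_j})_j$ is itself Cauchy in $L^p(\R^d)$. Hence $\tilde v^{n_j}\to \tilde v$ in $L^p(\R^d)$ for some $\tilde v\in L^p(\R^d)$; since each $\tilde v^{n_j}$ vanishes off $\Omega$, the same holds for $\tilde v$, and restricting to $\Omega$ produces the desired limit $v\in L^p(\Omega)$.

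There is no substantial obstacle here: the only delicate point is the bookkeeping that couples the $n$-uniform approximation of Step 1 with the $n$-by-$n$ precompactness of Step 2, and this is handled precisely by the diagonal extraction in Step 3. Everything else is a direct application of Minkowski's inequality, Young's inequality, and Arzelà--Ascoli, which is why the argument is normally attributed as a textbook result (hence the reference to~\cite{Bre11book}*{Thm.~4.26}).
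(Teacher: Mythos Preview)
Your argument is correct and is precisely the standard mollification/Arzel\`a--Ascoli proof of the Fr\'echet--Kolmogorov theorem. Note, however, that the paper does not actually prove this statement: it is merely recalled as a well-known result with a reference to~\cite{Bre11book}*{Thm.~4.26}, so there is no ``paper's own proof'' to compare against beyond that citation. Your write-up is essentially the proof one finds in that reference, so nothing further is needed.
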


We also recall the following well-known characterization of Sobolev and $BV$ functions, see~\cite{Bre11book}*{Prop.~9.3 and Rem.~6} for example.

\begin{theorem}
\label{carsobvshift}
Let $p\in[1,\infty)$ and $v \in L^p(\mathbb{R}^d)$.
The following are equivalent:
\begin{enumerate}[label=(\roman*),itemsep=1ex]

\item
$v \in W^{1,p}(\mathbb{R}^d)$ for $p>1$ or $v \in BV(\R^d)$ for $p=1$;

\item
$\sup_{\|h\|\le1}
\| \tau_{h}v-v \|_{p}<\infty$.
\end{enumerate} 
\end{theorem}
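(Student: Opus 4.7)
The plan is to prove both implications, interpreting condition (ii) in its quantitative form $\sup_{0<\|h\|\le 1}\|h\|^{-1}\|\tau_h v-v\|_p<\infty$ (the bound $\sup_{\|h\|\le 1}\|\tau_h v-v\|_p\le 2\|v\|_p$ holds for every $v\in L^p$ by the triangle inequality, and without the normalization $\|h\|^{-1}$ the condition would be vacuous).

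For (i)$\Rightarrow$(ii) with $p>1$, I would start with $v\in C^\infty_c(\R^d)$, where the identity $v(x+h)-v(x)=\int_0^1\nabla v(x+th)\cdot h\di t$ and Minkowski's integral inequality give $\|\tau_h v-v\|_p\le\|h\|\,\|\nabla v\|_p$. Density of $C^\infty_c(\R^d)$ in $W^{1,p}(\R^d)$ extends this to any $v\in W^{1,p}(\R^d)$, yielding (ii) after dividing by $\|h\|$. For $p=1$ and $v\in BV(\R^d)$, I would apply the same pointwise estimate to the mollifications $v_\eps=v\ast\rho_\eps$ (with $\|\nabla v_\eps\|_1\le|Dv|(\R^d)$) and pass $\eps\to 0^+$ in $\|\tau_h v_\eps-v_\eps\|_1\le\|h\|\,\|\nabla v_\eps\|_1$ to obtain $\|\tau_h v-v\|_1\le\|h\|\,|Dv|(\R^d)$.

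For (ii)$\Rightarrow$(i) with $p>1$, fix a coordinate direction $e_i$ and consider the difference quotients $D^i_h v=h^{-1}(\tau_{he_i}v-v)$ for $0<h\le 1$, which are uniformly bounded in $L^p(\R^d)$ by (ii). Reflexivity of $L^p$ yields a subsequence $D^i_{h_k}v\weak g_i$ weakly in $L^p(\R^d)$. For $\phi\in C^\infty_c(\R^d)$, the discrete integration-by-parts identity $\int(D^i_h v)\,\phi\di x=-\int v\,D^i_{-h}\phi\di x$, combined with the uniform convergence $D^i_{-h}\phi\to-\partial_i\phi$ as $h\to 0^+$, identifies $g_i=\partial_i v$ distributionally. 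Hence $\nabla v\in L^p(\R^d;\R^d)$, so $v\in W^{1,p}(\R^d)$.

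The main obstacle is the case $p=1$, where $L^1$ is not reflexive and the weak-compactness step above fails. My plan is to work dually: for any $\phi\in C^\infty_c(\R^d;\R^d)$ with $\|\phi\|_\infty\le 1$ and each $i\in\{1,\dots,d\}$, the translation identity and dominated convergence yield $\int v\,\partial_i\phi_i\di x=-\lim_{h\to 0^+}\int D^i_h v\,\phi_i\di x$, whence $\left|\int v\,\partial_i\phi_i\di x\right|\le\sup_{0<h\le 1}\|D^i_h v\|_1<\infty$ by H\"older's inequality and (ii). Summing over $i$ and taking the supremum over admissible $\phi$ shows that the distributional gradient $Dv$ extends to a continuous linear functional on $C_c(\R^d;\R^d)$ with respect to the sup norm; the Riesz--Markov representation theorem then upgrades $Dv$ to a finite $\R^d$-valued Radon measure, so $v\in BV(\R^d)$.
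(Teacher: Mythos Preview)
The paper does not supply its own proof of this statement; it is recalled as a known fact with a pointer to \cite{Bre11book}*{Prop.~9.3 and Rem.~6}. So there is nothing in the paper to compare your argument against beyond the textbook reference.

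You are right that condition~(ii) as literally printed is vacuous (any $v\in L^p(\R^d)$ satisfies $\|\tau_h v-v\|_p\le 2\|v\|_p$), and that the intended hypothesis---consistent both with Brezis and with how the paper actually applies the result in the compactness proof, where the estimate $\|\tau_h u-u\|_p\le C\|h\|$ is what is obtained---is the Lipschitz-type bound $\sup_{0<\|h\|\le 1}\|h\|^{-1}\|\tau_h v-v\|_p<\infty$. With that reading, your argument is correct and is essentially the standard one: (i)$\Rightarrow$(ii) via the fundamental theorem of calculus plus density (mollification for $p=1$), and (ii)$\Rightarrow$(i) via weak compactness of difference quotients in the reflexive case, and via a duality bound on $\int v\,\div\phi$ for $p=1$. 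The only cosmetic remark is that once you have $\sup_{\|\phi\|_\infty\le 1}\big|\int v\,\div\phi\big|<\infty$ you can conclude $v\in BV(\R^d)$ directly from the definition of total variation, without explicitly invoking Riesz--Markov.
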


\section{Proof of \texorpdfstring{\cref{thmGammaconvs1stab}}{Theorem 1.1}}
\label{sec:stab}

Throughout this section, we let $p\in [1,\infty)$, $(s_n)_{n\in\N}\subset(0,1)$ be such that $s_n\to1^-$, and $(f_n)_{n\in\N}\subset L^\infty(\R^d;[0,\infty))$ and $  f\in\mathrm{Lip}_b(\R^d; (0,\infty))$ be such that $f_n\to f$ in $L^\infty(\R^d)$.

We preliminarily prove \cref{thmGammaconvs1stab} in the case $f_n=f$ for every $n\in\N$.
We restate our result in this particular case for better clarity.

\begin{theorem}
\label{thmGammaconvs1}
The following hold.
\begin{enumerate}[label=(\roman*),itemsep=1ex]

\item
\label{item:compactness}
\emph{(Compactness)} 
If $(u^n)_{n \in \N}\subset L^p(\Omega)$ is such that
\begin{equation}
\label{eq:equibdd}
\sup_{n \in \N} 
\left(
(1-s_n)[\tilde u^n]^p_{s_n,p,f}+ \| u^n \|_{L^p(\Omega)}^p
\right)
<\infty,
\end{equation} 
then, up to a subsequence, $u^n\to u$ in $L^p(\Omega)$ for some $u\in W^{1,p}_0(\Omega)$ if $p \in (1,\infty)$ or $ u \in BV(\Omega)$ if $p=1$.

\item
\label{item:G-liminf}
\emph{($\Gamma$-liminf inequality)} 
If $(u_n)_{n \in \N}\subset L^p(\Omega)$ is such that $u_n\to u$ in $L^p(\Omega)$ for some $u\in L^p(\Omega)$, then
\begin{equation}
\label{linfsto1}
\begin{split}
&K_{d,p} \| \nabla u  \|^p_{p,f^2}
\le
\liminf_{n\to \infty}(1-s_n) [\tilde u^n]^p_{s_n,p,f} \quad \text{for $p \in (1,\infty)$},\\
& K_{d,1} \| D u  \|_{1,f^2}
\le
\liminf_{n\to \infty}(1-s_n) [\tilde u^n]_{s_n, 1, f} \quad \text{for $p=1$}.
\end{split}
\end{equation}

\item
\label{item:G-limsup}
\emph{($\Gamma$-limsup inequality)} 
If $u\in W^{1,p}_0(\Omega)$ if $p>1$, or $u\in BV(\Omega)$ if $p=1$, then there exists $(u^n)_{n \in \N}\subset L^p(\Omega)$ such that $u_n\to u$ in $L^p(\Omega)$ and
\begin{equation}
\label{lsupsto1}
\begin{split}
&K_{d,p} \| \nabla u  \|^p_{p,f^2}
=
\lim_{n\to \infty} (1-s_n)[\tilde u^n]^p_{s_n,p,f} \quad \text{for $p \in (1,\infty)$},\\
&K_{d,1} \| D u  \|_{1,f^2}
=
\lim_{n\to \infty} (1-s_n)[\tilde u^n]_{s_n,1, f} \quad \text{for $p =1$.}
\end{split}
\end{equation}
\end{enumerate}
\end{theorem}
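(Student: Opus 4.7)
\emph{Proof plan.} The weight $f$ being Lipschitz and bounded below by $m := \inf_{\R^d} f > 0$ makes the weighted and unweighted Gagliardo seminorms comparable, via $m^2 [\,\cdot\,]_{s,p}^p \le [\,\cdot\,]_{s,p,f}^p \le \|f\|_\infty^2 [\,\cdot\,]_{s,p}^p$, and allows $f$ to be treated as locally constant on a small scale $\delta$. These two features reduce every part of the theorem to the classical (unweighted) BBM theory on $\R^d$ and on bounded Lipschitz subdomains. For part~\ref{item:compactness}, hypothesis~\eqref{eq:equibdd} reduces to a uniform bound on $(1-s_n)[\tilde u^n]_{s_n,p}^p + \|u^n\|_p^p$, and the classical BBM compactness argument (see~\cites{BBM01,Pon04a}) applied via~\cref{frechetkolmthm} yields $u^n \to u$ in $L^p(\Omega)$ along a subsequence. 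The limit $u$ vanishes off $\Omega$ (as all $\tilde u^n$ do) and, by the $\Gamma$-liminf proved below, has finite weighted Dirichlet energy, so $u \in W^{1,p}_0(\Omega)$ for $p>1$ (or $u \in BV_0(\Omega)$ for $p=1$) thanks to the Lipschitz regularity of $\partial\Omega$.

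For the $\Gamma$-liminf~\ref{item:G-liminf}, the main obstacle is that the double-point weight $f(x) f(y)$ does not factor out of the Gagliardo integral, so one cannot directly apply the unweighted BBM. The plan is to localize at a small scale $\delta>0$. Assume without loss of generality that the liminf is finite, so that by compactness $u^n \to u$ in $L^p(\Omega)$. Tile $\R^d$ by essentially disjoint open cubes $\{Q_i^\delta\}_{i \in \N}$ of side $\delta$; discarding the non-negative off-diagonal contributions,
\[
[\tilde u^n]^p_{s_n,p,f} \ge \sum_i \int_{Q_i^\delta} \int_{Q_i^\delta} \frac{|\tilde u^n(x) - \tilde u^n(y)|^p}{\|x-y\|^{d+s_n p}}\, f(x)\, f(y) \di x \di y.
\]
On $Q_i^\delta \times Q_i^\delta$, the Lipschitz estimate yields $f(x) f(y) \ge (\inf_{Q_i^\delta} f)^2 - C\delta$ with $C := \|f\|_\infty \Lip(f)$, so
\[
(1-s_n) [\tilde u^n]^p_{s_n,p,f} \ge \sum_i (\inf_{Q_i^\delta} f)^2 \,(1-s_n) \int_{Q_i^\delta} \int_{Q_i^\delta} \frac{|\tilde u^n(x) - \tilde u^n(y)|^p}{\|x-y\|^{d+s_n p}} \di x \di y - C \delta M,
\]
where $M := \sup_n (1-s_n)[\tilde u^n]_{s_n,p}^p < \infty$. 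The unweighted BBM $\Gamma$-liminf on each bounded Lipschitz cube $Q_i^\delta$ (see~\cites{BBM01,Pon04a}), combined with Fatou's lemma for the series, then yields
\[
\liminf_n (1-s_n)[\tilde u^n]^p_{s_n,p,f} \ge K_{d,p} \sum_i (\inf_{Q_i^\delta} f)^2 \int_{Q_i^\delta} \|\nabla \tilde u\|^p \di x - C \delta M,
\]
with $\di|D\tilde u|$ in place of $\|\nabla \tilde u\|\di x$ when $p = 1$. Sending $\delta \to 0^+$, uniform continuity of $f$ on bounded sets makes the piecewise-constant approximant $\sum_i (\inf_{Q_i^\delta} f)^2 \mathbb{1}_{Q_i^\delta}$ converge uniformly to $f^2$, delivering~\eqref{linfsto1}.

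For the $\Gamma$-limsup~\ref{item:G-limsup}, the plan is the constant recovery sequence $u^n = u$ for smooth $u$, combined with density. For $u \in C^\infty_c(\Omega)$, the classical polar-coordinate computation of~\cites{BBM01,Dav02} gives, pointwise in $x \in \R^d$,
\[
(1-s) \int_{\R^d} \frac{|u(x+h) - u(x)|^p}{\|h\|^{d+sp}} \di h \xrightarrow[s \to 1^-]{} K_{d,p} \|\nabla u(x)\|^p,
\]
dominated by a fixed $L^1(\R^d)$ function (exploiting $\|u\|_\infty$, $\|\nabla u\|_\infty$, and $\supp u \Subset \Omega$). Multiplying by $f(x) f(x+h)$, which is continuous and bounded, and using dominated convergence in $x$ yields $\lim_{s \to 1^-} (1-s) [u]^p_{s,p,f} = K_{d,p} \|\nabla u\|^p_{p,f^2}$. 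For general $u \in W^{1,p}_0(\Omega)$ (or $BV_0(\Omega)$ when $p=1$), approximate by $u_k \in C^\infty_c(\Omega)$ in the corresponding Sobolev norm; since $f^2 \in L^\infty(\R^d)$, the right-hand side is continuous under this convergence, and a diagonal selection $u^n = u_{k(n)}$ produces the recovery sequence~\eqref{lsupsto1}.
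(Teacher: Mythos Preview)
Your overall strategy matches the paper's: localize to small cubes and freeze the weight for the $\liminf$, and use pointwise BBM for smooth functions plus density for the $\limsup$. The difference is one of economy. Where the paper redevelops the local unweighted estimates from scratch (weighted versions of the Ambrosio--De~Philippis--Martinazzi translation bounds for compactness, and a mollification--Taylor argument inside each cube for the $\liminf$), you instead invoke the classical unweighted BBM compactness and $\Gamma$-$\liminf$ on each cube as black boxes. This is legitimate and shorter; the paper's approach is more self-contained and makes the role of the weight explicit at every step.

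Two points deserve tightening. First, the claim $m=\inf_{\R^d} f>0$ is not guaranteed by $f\in\mathrm{Lip}_b(\R^d;(0,\infty))$; the paper is careful to use only $\inf_{\Omega_1} f>0$ on a bounded neighborhood of $\Omega$, and handles the far interactions $\|x-y\|\ge1$ separately via the $L^p$ bound. Your reduction to the unweighted seminorm should do the same. Second, in the $\limsup$ step the phrase ``multiplying by $f(x)f(x+h)$ and using dominated convergence in $x$'' hides the real work: $f(x+h)$ sits inside the $h$-integral, so you must split $f(x)f(x+h)=f(x)^2+f(x)(f(x+h)-f(x))$ and use the Lipschitz bound on $f$ to show the cross term contributes a kernel with one extra power of $\|h\|$, hence vanishes after multiplying by $(1-s)$. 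This is exactly the paper's decomposition into $I_1,I_2,I_3$ (and $I_1',I_1''$). Finally, there is a mild circularity in your write-up: part~(i) appeals to the $\Gamma$-$\liminf$ of part~(ii) for the regularity of the limit, while part~(ii) appeals to ``compactness'' (part~(i)). Either obtain $u\in W^{1,p}_0$ in part~(i) directly from the translation estimate and \cref{carsobvshift} (as the paper does), or make explicit that the cube-wise unweighted $\liminf$ you cite already delivers the regularity without reference to~(i).
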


The proof of the three statements~\ref{item:compactness}, \ref{item:G-liminf}, and~\ref{item:G-limsup} of \cref{thmGammaconvs1} is split across  \cref{subsec:compactness,subsec:liminf,subsec:limsup}.
The proof of \cref{thmGammaconvs1stab} is given in \cref{subsec:stab}. 

\subsection{Proof of  \texorpdfstring{\cref{thmGammaconvs1}\ref{item:compactness}}{Theorem 3.1(i)}}
\label{subsec:compactness}

We adapt the strategy of~\cite{AdPM01} to our setting.
To this aim, we need two preliminary results.
The first one is the following, which generalizes~\cite{AdPM01}*{Prop.~5} to any $p\in[1,\infty)$ and  weighted $L^p$ norms.
We also refer to~\cite{CdLKNP23}*{Prop.~2.4} for the case $p=2$ without weights. 

\begin{proposition}
Let $f \in \mathrm{Lip}_b(\R^d; (0,\infty))$.
There exists $C=C(d,p)>0$ such that
\begin{equation}
\label{prop1formenunc}
\| \tau_{h}v-v \|_{L^p_f(E)}^{p} 
\leq 
C\frac{\|h\|^{p}}{\rho^{d+p}}\int_{B_\rho}\|\tau_{y}v-v \|_{L^p_f(E_{\|h\|})}^p \,\mathrm{d} y
\end{equation}
for every $v \in L^p(\mathbb{R}^d)$, $h \in \mathbb{R}^d$, $\rho\in(0,\|h\|]$, and every bounded open set $E \subset\mathbb{R}^d$, where $E_{\|h\|}$ is defined according to the notation in~\eqref{ingrass}.
\end{proposition}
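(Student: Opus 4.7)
The plan is to adapt the telescoping argument of~\cite{AdPM01}*{Prop.~5} (originally stated for $p=1$ and unit weight) to the weighted $L^p$ setting, generalising in two directions simultaneously: using Jensen's inequality to pass from $p=1$ to general $p\in[1,\infty)$, and exploiting the Lipschitz and positivity hypotheses on $g$ to absorb the weight along translations. The scheme has two stages: a \emph{telescoping} stage reducing the shift by $h$ to one by a vector of size $\sim\rho$, followed by an \emph{averaging} stage turning this single short shift into a genuine average over $B_\rho$.

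For the telescoping, I would pick $N=\lceil 2\|h\|/\rho\rceil$, so that $h'\coloneqq h/N\in B_{\rho/2}$ and $N\leq C(d)\,\|h\|/\rho$. The discrete chain identity $v(x+h)-v(x)=\sum_{k=0}^{N-1}[v(x+(k+1)h')-v(x+kh')]$ together with Jensen's inequality gives the pointwise bound $|v(x+h)-v(x)|^p\leq N^{p-1}\sum_{k=0}^{N-1}|v(x+s_k+h')-v(x+s_k)|^p$ with partial sums $s_k\coloneqq kh'\in B_{\|h\|}$. Multiplying by $g(x)$ and integrating over $x\in E$, the substitution $z=x+s_k$ in the $k$-th term produces the weight $g(z-s_k)$ against $|v(z+h')-v(z)|^p$ on a translate of $E$ contained in $E_{\|h\|}$; since $g$ is Lipschitz and bounded below on $E_{\|h\|}$, one has $g(z-s_k)\leq C g(z)$ uniformly in $k$, yielding $\|\tau_h v-v\|_{L^p_g(E)}^p\leq C N^p\,\|\tau_{h'}v-v\|_{L^p_g(E_{\|h\|})}^p$. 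For the averaging, the triangle inequality $|v(x+h')-v(x)|^p\leq 2^{p-1}[|v(x+h')-v(x+y)|^p+|v(x+y)-v(x)|^p]$, averaged over $y\in B_{\rho/2}$, handles the second summand directly; for the first, the substitution $w=y-h'$ maps $B_{\rho/2}$ into $B_\rho$ (because $\|h'\|\leq\rho/2$), and a further translation $z=x+h'$ recasts it as an integral over $B_\rho$ of $\|\tau_w v-v\|_{L^p_g(E_{\|h\|+\rho/2})}^p$, again with the shifted weight $g(z-h')$ controlled by $g(z)$. Collecting the factor $N^p/|B_{\rho/2}|\asymp\|h\|^p/\rho^{d+p}$ gives the claimed inequality.

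The main obstacle is the handling of the weight $g$ under translations by vectors of size up to $\|h\|$: the only available lever is the Lipschitz bound $|g(z-s)-g(z)|\leq\mathrm{Lip}(g)\,\|s\|$, which combined with the essential positivity of $g$ on the bounded set $E_{\|h\|}$ yields the multiplicative comparison $g(z-s)\leq C g(z)$, with $C$ a priori depending on $\|g\|_\infty$, $\mathrm{Lip}(g)$, and $\inf_{E_{\|h\|}}g$. A minor cosmetic issue is that the natural form of the scheme yields $E_{\|h\|+\rho/2}$ in place of $E_{\|h\|}$ in the averaging stage, but the inclusion $E_{\|h\|+\rho/2}\subset E_{2\|h\|}$ lets one absorb the discrepancy into the constant since $\rho\leq\|h\|$, possibly at the cost of enlarging the radius of averaging in an intermediate step.
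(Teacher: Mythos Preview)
Your telescoping-plus-averaging scheme differs substantially from the paper's argument. The paper uses a mollification decomposition: with $\varphi\in C^1_c(B_1)$, $\int\varphi=1$, it writes $v=U_\rho+V_\rho$ where $U_\rho(x)=\rho^{-d}\int_{B_\rho}v(x+y)\,\varphi(y/\rho)\,dy$ and $V_\rho=v-U_\rho$; Jensen controls $|V_\rho(\xi)|^p$, while the Fundamental Theorem of Calculus together with the integral formula for $\nabla U_\rho$ controls $|U_\rho(x+h)-U_\rho(x)|^p$. This yields a \emph{pointwise} inequality at each $x$ with constant depending only on $d,p$ (through $\|\varphi\|_\infty$, $\|\nabla\varphi\|_\infty$), which is then multiplied by $g(x)$ and integrated over $E$. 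The weight sits passively at the base point throughout the core estimate rather than being dragged along translations.

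Your route, by contrast, integrates against $g$ first and only then changes variables $z=x+s_k$, forcing the comparison $g(z-s_k)\le C\,g(z)$; as you rightly flag, this makes $C$ depend on $\Lip(g)$, $\|g\|_\infty$, and $\inf_{E_{\|h\|}}g$---hence on $g$, $E$, and $\|h\|$. That does not establish the stated claim $C=C(d,p)$. In fairness, if one scrutinises the paper's ``omitted simple details'', the terms involving $V_\rho(x+h)$ and $\nabla U_\rho(x+th)$ also require the same shifted-weight comparison after the substitutions $z=x+h$ and $z=x+th$, so the $g$-dependence is in fact latent in both approaches; the downstream applications only need a $g$-dependent constant anyway. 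Your secondary worry about landing on $E_{\|h\|+\rho/2}$ rather than $E_{\|h\|}$ can be repaired without cost: the telescoping already confines the intermediate points $x+s_k$ to $E_{(N-1)\|h\|/N}$, leaving exactly $\|h'\|=\|h\|/N$ of slack to absorb the final shift by $h'$ in the averaging step and land on $E_{\|h\|}$ precisely.
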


\begin{proof}
The proof closely follows the one of~\cite{AdPM01}*{Prop.~5}. 
Let $\varphi \in C_{c}^1(B_1)$ be  such that\begin{equation}
\label{eq:hp_varphi_mollifier}
\varphi\geq 0
\quad 
\text{and}
\quad 
\int_{B_1} \varphi(x)\,\mathrm{d} x=1.
\end{equation}
For every $\rho>0$, we let $U_\rho$ and $V_\rho$ be defined as
\begin{align*}
U_\rho(x)
&
=
\frac{1}{\rho^d} \int_{B_\rho} v(x+y) \varphi\left(\frac{y}{\rho}\right)\,\mathrm{d} y,
\\
V_\rho(x)
&
=\frac{1}{ \rho^d}\int_{B_\rho}(v(x)-v(x+y))\varphi\left(\frac{y}{\rho}\right)\,\mathrm{d} y,
\end{align*}
for every $x\in\R^d$.	
Owing to~\eqref{eq:hp_varphi_mollifier}, we have that $
v(x)
= 
U_\rho(x)+V_\rho(x)$  for every $\rho>0$ and $x\in\R^d$, so that 
\begin{equation}
\label{aa1}
|\tau_hv(x)-v(x)|^p
\le
3^p\big(
|U_\rho(x+h) - U_\rho(x)|^p
+
|V_\rho(x)|^p
+
|V_\rho(x+h)|^p
\big).
\end{equation}
We now estimate each term in the right-hand side of~\eqref{aa1} separately. 
Concerning the second and third term, by Jensen's inequality,
we can estimate
\begin{equation}
\label{prop1form2dim}
\vert V_\rho(\xi) \vert^{p} 
\leq 
\frac{\omega_d}{\rho^d}\,\|\varphi \|^p_{\infty} \int_{B_\rho} \vert v(\xi)-\tau_{y}v(\xi) \vert^p\, \mathrm{d} y,
\end{equation}
for every $\xi\in\R^d$.
Instead, concerning the first term, by the change of variables $z=x+y$, we can rewrite
\[
U_\rho(x)
=
\frac{1}{\rho^d}\int_{B_\rho(x)}v(z)\,\varphi\left(\frac{z-x}{\rho}\right) \,\mathrm{d} z.
\]
Thus, owing to the fact that $\phi((z-\cdot)\rho^{-1})\in C^1_c(B_{\rho}(x))$, we can integrate by parts and get  
\begin{equation*}
\begin{split}
\nabla U_\rho(x)
=
&
-\frac{1}{\rho^{d+1}}\int_{B_\rho(x)} v(z)\,\nabla\varphi\left(\frac{z-x}{\rho}\right) \,\mathrm{d} z 
\\
=
&
-\frac{1}{\rho^{d+1}}\int_{B_\rho(x)}(v(z)-v(x))\,\nabla\varphi\left(\frac{z-x}{\rho}\right) \,\mathrm{d} z
\\
=
&
-\frac{1}{\rho^{d+1}}\int_{B_\rho}(v(x+y)-v(x))\,\nabla\varphi\left(\frac{y}{\rho}\right) \,\mathrm{d} y.
\end{split}
\end{equation*}
Therefore, by the Fundamental Theorem of Calculus and by Jensen's inequality, we obtain 
\begin{align}
\label{prop1form3dim}
\vert U_\rho(x&+h) - U_\rho(x)\vert^{p} 
\leq 
\| h \|^{p} \int_{0}^{1} \vert \nabla U_\rho(x+th) \vert^p \,\mathrm{d} t 
\nonumber
\\
&
\leq \omega_d^{p-1}\frac{\|h\|^p}{\rho^{d+p}}\| \nabla\varphi \|_{\infty}^p\int_{0}^{1} \int_{B_\rho} \vert \tau_yv(x+th)-v(x+th)\vert^p \,\mathrm{d}y\,\mathrm{d}t.	
\end{align}
Now, using that $\rho<\|h\|$ and combining~\eqref{aa1}, \eqref{prop1form2dim}, and~\eqref{prop1form3dim}, we get that
\begin{align}
\vert \tau_h v(x)-v(x)\vert^p 
&
\leq
C \frac{\|h\|^p}{\rho^{d+p}} \int_{0}^{1} \int_{B_\rho} \vert \tau_y v(x+th)-v(x+th)\vert^p \,\mathrm{d}y\,\mathrm{d}t
\nonumber
\\
&
+ C \frac{\|h\|^p}{\rho^{d+p}} \int_{B_\rho}  \vert \tau_y v(x)-v(x)\vert^p \,\mathrm{d}y
\nonumber
\\
&
+ C \frac{\|h\|^p}{\rho^{d+p}}\int_{B_\rho} \vert \tau_y v(x+h)-v(x+h)\vert^{p} \,\mathrm{d}y,
\label{aa2}
\end{align}
where we have set
\[
C=C(d,p)=(3 \max\{\|\varphi \|_{\infty} ; \|\nabla\varphi \|_{\infty}\})^p\, \omega_d.
\]
Multiplying inequality~\eqref{aa2} by $f(x)$ and integrating with respect to $x\in E$, the claim immediately follows by Fubini's Theorem.
We omit the simple details.
\end{proof}

We can now pass to the following result, which extends~\cite{AdPM01}*{Prop.~4} to any $p\in[1,\infty)$ also in the case of weighted $L^p$ norms.

\begin{proposition}
\label{prop2enunciato}
Let $f\in \mathrm{Lip}_b(\R^d; (0,\infty))$. 
There exists $C=C(d,p)>0$ such that 
\begin{equation*}
\| \tau_{h}v-v \|_{L^p_f(E)}^{p} 
\leq 
C (1-s) \|h\|^{sp} \int_{B_{\|h\|}} \frac{\|\tau_{y}v-v \|_{L^{p}_f(E_{\| h \|})}^{p}}{\|y\|^{d+sp}} \,\mathrm{d}y
\end{equation*}
for every $v \in L^p(\mathbb{R}^d)$, $h \in \mathbb{R}^d$, and every bounded open set $E \subset \mathbb{R}^d$,
where $E_{\|h\|}$ is defined according to the notation  in~\eqref{ingrass}.
\end{proposition}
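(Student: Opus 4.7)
The plan is an ``integration over scales'' argument: apply the previous proposition for every $\rho\in(0,\|h\|]$, multiply by a carefully chosen power of $\rho$, and integrate in $\rho$ so that the resulting double integral on the right-hand side reproduces the fractional kernel $\|y\|^{-(d+sp)}$. The weight $g$ plays no active role in this scale argument, so the reasoning is exactly the classical one that upgrades pointwise-in-$\rho$ estimates to fractional Sobolev bounds, only carried out with $g$-weighted norms.

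Concretely, the previous proposition gives, for every $\rho\in(0,\|h\|]$,
\[
\|\tau_h v - v\|_{L^p_g(E)}^{p}
\le
C\,\frac{\|h\|^p}{\rho^{d+p}}\int_{B_\rho}\|\tau_y v-v\|_{L^p_g(E_{\|h\|})}^{p}\,\mathrm{d}y.
\]
The next step is to multiply both sides by $\rho^{p(1-s)-1}$ and integrate over $\rho\in(0,\|h\|]$. On the left-hand side, since $s<1$ we have $p(1-s)-1>-1$, so the $\rho$-integral converges and produces the prefactor $\|h\|^{p(1-s)}/(p(1-s))$. On the right-hand side, Fubini's theorem swaps the integration over the region $\{(\rho,y):\|y\|<\rho<\|h\|\}$, and the inner $\rho$-integral becomes $\int_{\|y\|}^{\|h\|}\rho^{-(d+sp+1)}\,\mathrm{d}\rho$, which is bounded by $\|y\|^{-(d+sp)}/(d+sp)$.

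Rearranging and using $d+sp\ge d$ to absorb the factor $p/(d+sp)$ into a constant depending only on $d$ and $p$ yields the claimed inequality: the factor $(1-s)$ arises from the denominator $p(1-s)$ produced on the left, while the cancellation of $\|h\|^{p(1-s)}$ against $\|h\|^p$ gives the prefactor $\|h\|^{sp}$. I do not foresee any serious obstacle, as the argument is essentially computational; the only delicate point is the precise choice of the exponent $p(1-s)-1$, which is dictated by three compatibility constraints, namely that the $\rho$-integration exactly reproduces the fractional kernel $\|y\|^{-(d+sp)}$, remains integrable at the origin, and contributes the characteristic $(1-s)$ scaling that the fractional seminorm demands.
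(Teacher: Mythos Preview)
Your argument is correct and is essentially the paper's own proof: both multiply the previous proposition by $\rho^{p(1-s)-1}$ and integrate over $\rho\in(0,\|h\|]$, producing the factor $(1-s)$ on the left and the fractional kernel on the right. The only cosmetic difference is that the paper first passes to polar coordinates, writing $\int_{B_\rho}\|\tau_y v-v\|_{L^p_g(E_{\|h\|})}^p\,\mathrm{d}y=\int_0^\rho\phi_v(t)\,\mathrm{d}t$ with $\phi_v(t)=\int_{\partial B_t}\|\tau_y v-v\|_{L^p_g(E_{\|h\|})}^p\,\mathrm{d}\mathcal{H}^{d-1}(y)$, and then invokes a separately stated Hardy-type inequality (from \cite{AdPM01}) to handle the double integral in $(\rho,t)$; your direct Fubini swap in $(\rho,y)$ is exactly the proof of that Hardy inequality unwound, so nothing is gained or lost either way.
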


The proof of \cref{prop2enunciato} requires the following  Hardy-type inequality, which is taken from~\cite{AdPM01}*{Prop.~6}.

\begin{lemma}\label{Hardytipodis}
If $\phi\colon\mathbb{R}\to [0,\infty)$ is a Borel function, then 
\begin{equation}
\label{pastasciutta}
\int_{0}^{r} \frac{1}{\rho^{d+l+1}} \int_{0}^{\rho} \phi(t)\,\mathrm{d}t\,\mathrm{d}\rho 
\leq 
\frac{1}{d+l}\int_{0}^{r} \frac{\phi(t)}{t^{d+l}}\,\mathrm{d}t,
\end{equation}
for every $l,r\ge 0$.
\end{lemma}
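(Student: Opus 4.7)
The statement is a standard Hardy-type inequality whose proof is a one-line application of Fubini's theorem, so the plan is essentially to make the order of integration explicit and then compute.

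First, I would observe that the domain of integration on the left-hand side is
\[
\{(t,\rho)\in\R^2 : 0\le t\le \rho\le r\},
\]
and since $\phi$ is nonnegative and Borel measurable, Fubini--Tonelli allows us to exchange the order of integration without any integrability hypothesis. Applying this, we rewrite
\[
\int_{0}^{r} \frac{1}{\rho^{d+l+1}} \int_{0}^{\rho} \phi(t)\di t \di \rho
=
\int_{0}^{r} \phi(t) \int_{t}^{r} \frac{1}{\rho^{d+l+1}}\di\rho\di t.
\]

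Next, I would compute the inner integral in $\rho$ explicitly. Since $d\in\N$ and $l\ge 0$ give $d+l\ge 1>0$, we have
\[
\int_{t}^{r} \frac{1}{\rho^{d+l+1}}\di\rho
=
\frac{1}{d+l}\left(\frac{1}{t^{d+l}}-\frac{1}{r^{d+l}}\right)
\le
\frac{1}{(d+l)\,t^{d+l}},
\]
where the last step uses the nonnegativity of $1/r^{d+l}$. Plugging this bound back and using $\phi\ge 0$ yields the claimed inequality.

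There is essentially no obstacle here: the only point worth noting is that one must dispose of the edge case $t=0$, where $1/t^{d+l}$ can be $+\infty$. This is harmless because the inequality is meant in $[0,+\infty]$, and the bound for the inner integral is valid for every $t\in(0,r]$, which is enough since $\{0\}$ has Lebesgue measure zero. The cases $r=0$ or $\phi\equiv 0$ are trivial. Overall the proof is a clean application of Tonelli's theorem together with an elementary antiderivative.
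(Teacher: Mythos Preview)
Your proof is correct and is exactly the standard argument for this Hardy-type inequality. The paper itself does not give a proof of this lemma but simply cites it from \cite{AdPM01}*{Prop.~6}, so there is nothing to compare against beyond noting that your Fubini--Tonelli computation is the expected one.
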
 

Actually, in the proof of \cref{prop2enunciato} we use the weaker estimate
\begin{equation}
\label{eq:hardy}
\int_{0}^{r} \frac{1}{\rho^{d+l+1}} \int_{0}^{\rho} \phi(t)\,\mathrm{d}t\,\mathrm{d}\rho 
\leq 
\frac{1}{d}\int_{0}^{r} \frac{\phi(t)}{t^{d+l}}\,\mathrm{d}t,
\end{equation}
that is, we can ignore the dependence on $l$ in the prefactor in the right-hand side of~\eqref{pastasciutta}.

\begin{proof}[Proof of \cref{prop2enunciato}]
We let $\phi_v\colon[0,\|h\|]\to\R$ be defined as 
\begin{equation}
\label{eq:def_phi_v}
\phi_v(t)
= 
\int_{\partial B_t} \|\tau_{y} v - v  \|_{L^{p}_f (E_{\|h\| })}^{p} \,\mathrm{d}\mathcal{H}^{d-1}(y),
\end{equation}
for all $t>0$.
Owing to~\eqref{prop1formenunc} and to the definition in~\eqref{eq:def_phi_v}, we can estimate
\begin{equation}
\label{prop2fomr1dim}
\| \tau_{h}v-v \|_{L_f^{p}(E)}^{p} 
\leq 
C\frac{\|h\|^{p}}{\rho^{d+p}} \int_{0}^{\rho} \phi_v(t)\,\mathrm{d}t,
\end{equation}
for some $C=C(d,p)>0$. 
We now multiply both sides of~\eqref{prop2fomr1dim} by $\rho^{-1+p-sp}$ and integrate in the interval $ [0,\|h\|]$ with respect to $\rho$, getting
\[
\| \tau_{h}v-v \|_{L_f^{p}(E)}^{p} 
\leq 
C p(1-s) \frac{\|h\|^{p}}{\|h\|^{p-sp}} \int_0^{\|h\|}\frac{1}{\rho^{d+sp+1}} \int_{0}^{\rho} \phi_v(t)\,\mathrm{d}t \,\mathrm{d}\rho.
\]
By exploiting~\eqref{eq:hardy} with $l=sp$ and $\phi=\phi_v$, we thus obtain that 
\begin{equation*}
\| \tau_{h}v-v \|_{L^{p}_f(E)}^{p} 
\leq 
C(1-s) \|h\|^{sp} \int_{0}^{\|h\|}\frac{\phi_v(t)}{t^{d+sp}}\,\mathrm{d}t,
\end{equation*}
and the conclusion follows from the very definition of $\phi_v$.
\end{proof}

We are now ready to detail the proof of the compactness statement~\ref{item:compactness} in \cref{thmGammaconvs1}.

\begin{proof}[Proof of \cref{thmGammaconvs1}\ref{item:compactness}]
Given $h\in\R^d$ such that $\|h\| < 1$, we have
$\Omega_{\|h\|}\Subset \Omega_1 \Subset (\Omega_1)_{\|h\|}$
(recall the notation in~\eqref{ingrass}).
We can hence set
\[
c = c(\Omega, f) =\inf_{(\Omega_1)_{\|h\|}} f>0,
\]
and observe that
\[
c^2 \| \tau_{h}\tilde{u}^n-\tilde{u}^n \|^p_{L^{p}(\R^d)}
=
c^2 \| \tau_{h}\tilde{u}^n-\tilde{u}^n \|^p_{L^{p}(\Omega_{\|h\|})} 
\le
c \| \tau_{h}\tilde{u}^n-\tilde{u}^n \|^p_{L^{p}_{f}(\Omega_1)}.
\]
By \cref{prop2enunciato} applied on $\Omega_1$ and by the previous inequality, we have
\[
c^2 \| \tau_{h}\tilde{u}^n-\tilde{u}^n \|^p_{L^{p}(\R^d)}
\le
C(1-s_n)\|h\|^{s_n p} \int_{B_{\|h\|}} \frac{ c \|\tau_{y}\tilde{u}^n-\tilde{u}^n \|_{L^{p}_f((\Omega_1)_{\| h \|})}^{p}}{\|y\|^{d+s_n p}} \,\mathrm{d}y,
\]
where $C=C(d,p)>0$.
Now, explicitly writing down the $L^p_f$ norm on the right-hand side, swapping order of integration, performing the change of variables $y=\xi-x$, and bounding $c$ with $f(\xi)$, we obtain that
\begin{align*}
c^2 \| \tau_{h}\tilde{u}^n&-
\tilde{u}^n \|^p_{L^{p}(\R^d)}
\le
C(1-s_n) \|h\|^{s_np} \int_{B_{\|h\|}} \int_{(\Omega_1)_{\|h\|}} \frac{|\tilde{u}^n(x+y)-\tilde{u}^n(x)|^p}{\|y\|^{d+s_n p}} c f(x) \,\mathrm{d}x \,\mathrm{d}y
\\
&
\leq
C(1-s_n) \|h\|^{s_np} \int_{(\Omega_1)_{\|h\|}} \int_{B_{\|h\|}(x)}  \frac{|\tilde{u}^n(\xi)-\tilde{u}^n(x)|^p}{\|\xi-x\|^{d+s_n p}} f(\xi)f(x)\,\mathrm{d}\xi \,\mathrm{d}x 
\\
&
\le
C(1-s_n) \|h\|^{s_np} [\tilde u^n]^p_{s_n, p,f}.
\end{align*}
Dividing by $c^2$ and owing to our equiboundedness assumption~\eqref{eq:equibdd}, we get that
\begin{equation} 
\label{teocompform1}
\| \tau_{h}\tilde{u}^n-\tilde{u}^n \|_{L^{p}(\R^d)}
\leq
C(d,p,M, \Omega, f)\|h\|^{s_n},
\end{equation}
for all $h\in\R^d$ such that $\|h\|\le 1$. Thus, owing to~\eqref{eq:equibdd} and to~\eqref{teocompform1}, we can apply \cref{frechetkolmthm} and find $u \in L^{p}(\mathbb{R}^d)$ such that, up to a subsequence, $\tilde u^n \to  u$ in $L^p(\R^d)$. 
Furthermore, since $\tilde{u}^n = 0$ for all $n\in\N$ on $\R^d\setminus \Omega$, we also have that $u=0$ on $\mathbb{R}^d \setminus \Omega$.
Finally, letting $n\to\infty$ in~\eqref{teocompform1}, we have
\[
\| \tau_{h}u-u \|_{L^{p}(\R^d)}
\leq C(d,p,M,\Omega,f)\|h\|,
\]
for all $h$ with $\|h\|\le 1$, so that $u\in W^{1,p}(\R^d)$ for $p>1$ or $u\in BV(\R^d)$ for $p=1$ by \cref{carsobvshift}. 
Since $u=0$ on $\R^d\setminus \Omega$ and $\Omega$ has Lipschitz boundary, we get that $u|_\Omega\in W^{1,p}_0(\Omega)$ for $p>1$, or $u|_\Omega\in BV(\Omega)$ for $p=1$, concluding the proof.
\end{proof}

\subsection{Proof of  \texorpdfstring{\cref{thmGammaconvs1}\ref{item:G-liminf}}{Theorem 3.1(ii)}}
\label{subsec:liminf}

We adapt the strategy of the proof of~\cite{CdLKNP23}*{Thm.~2.1} to our setting. 
To this aim, we need some preliminaries.

Let us begin with some notation. 
We let $Q=(-1,1)^d$.
Consequently, given $\gamma>0$,  for every $i \in \gamma \mathbb{Z}^d$ and $a\ge0$, we let $Q_i^a= i+ aQ$. 
Note that, if $a=\gamma$, then the family of cubes $(Q^\gamma_i)_{i}$ is a tiling of $\R^d$.
Moreover, since $\Omega$ is bounded, the set
\begin{equation*}
I_\gamma
=
\{i\in\gamma\mathbb Z^d : \mathcal L^d(Q^\gamma_i\cap \Omega)>0\}
\end{equation*} 
is finite.
In addition, given $f\colon\R^d\to\R$, we let $f_i^a= \inf_{Q_i^a}f$. Notice that, whenever $f\in\mathrm{Lip}(\R^d;(0,\infty))$, then $f_i^a>0$.
Finally, we let $\eta\in C_{c}^\infty(B_1)$ be such that $\eta\ge 0$ and $\int_{B_1}\eta\,\mathrm{d}x=1$ and, for every $\varepsilon>0$, we set $\eta_\varepsilon(\,\cdot\,)=\varepsilon^{-d}\eta(\,\cdot\,/\varepsilon)$.
Accordingly, we let $u_\varepsilon= u \ast \eta_{\varepsilon}$ for every $\varepsilon>0$ and $u\in L^1_{\rm loc}(\R^d)$.

We can now prove the following preliminary estimate.
\begin{lemma}
\label{marimba}
Let $p\in [1, \infty)$, $f \in \mathrm{Lip}_b(\R^d;(0,\infty))$. There exist $\varepsilon,\beta,\gamma>0$ with $\varepsilon \ll \beta \ll \gamma$ such that
\begin{equation}
\begin{split}
\int_{Q_i^{(1-\beta) \gamma}} \int_{Q_i^{(1-\beta) \gamma}} 
&
\frac{|\tilde{u}_\varepsilon(x)- \tilde{u}_{\varepsilon}(y)|^p }{\|x-y\|^{d+sp}} (f_i^{\gamma})^2 \,\mathrm{d}x \,\mathrm{d}y 
\\
&
\quad\qquad
\leq 
\int_{Q_i^{\gamma}} \int_{Q_i^{\gamma}} \frac{|\tilde u(x)- \tilde u(y)|^p}{\|x-y\|^{d+sp}}f(x)f(y) \,\mathrm{d}x \,\mathrm{d}y
\end{split}
\end{equation}
holds for all $i\in \gamma\mathbb{Z}^d$ and $ u \in L^p(\Omega)$.
\end{lemma}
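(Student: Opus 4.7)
The plan is a three-step reduction combining Jensen's inequality for the convolution, a translation change of variables, and the definition of $g_i^\gamma$ as the infimum of $g$ on $Q_i^\gamma$; the parameter hierarchy will enter only to ensure that translations of the inner cube $Q_i^{(1-\beta)\gamma}$ by vectors in $B_\varepsilon$ remain inside the outer cube $Q_i^\gamma$.

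First, since $\eta_\varepsilon$ is a probability density supported in $B_\varepsilon$, the identity
\begin{equation*}
\tilde u_\varepsilon(x)-\tilde u_\varepsilon(y)=\int_{B_\varepsilon}\eta_\varepsilon(z)\,\big(\tilde u(x-z)-\tilde u(y-z)\big)\di z
\end{equation*}
together with Jensen's inequality (valid since $p\ge 1$) yields the pointwise bound
\begin{equation*}
|\tilde u_\varepsilon(x)-\tilde u_\varepsilon(y)|^p\le\int_{B_\varepsilon}\eta_\varepsilon(z)\,|\tilde u(x-z)-\tilde u(y-z)|^p\di z.
\end{equation*}
Inserting this estimate into the left-hand side and interchanging the order of integration by Fubini reduces the problem to bounding, uniformly in $z\in B_\varepsilon$, the shifted double integral of $|\tilde u(x-z)-\tilde u(y-z)|^p\|x-y\|^{-d-sp}$ over $Q_i^{(1-\beta)\gamma}\times Q_i^{(1-\beta)\gamma}$.

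Next, I would perform the translation $(x,y)\mapsto(x-z,y-z)$, which preserves both the Lebesgue measure and the denominator $\|x-y\|^{d+sp}$. Here the parameter relation enters: provided $\varepsilon\le\beta\gamma$, every coordinate of a point in $Q_i^{(1-\beta)\gamma}-z$ differs from the corresponding coordinate of $i$ by at most $(1-\beta)\gamma+\varepsilon\le\gamma$, so that $Q_i^{(1-\beta)\gamma}-z\subset Q_i^\gamma$ uniformly in $z\in B_\varepsilon$. Since the integrand is nonnegative, I can enlarge the domain of integration to $Q_i^\gamma\times Q_i^\gamma$, after which the $z$-integral trivializes via $\int_{B_\varepsilon}\eta_\varepsilon=1$ and I obtain the intermediate estimate
\begin{equation*}
(g_i^\gamma)^2\int_{Q_i^\gamma}\int_{Q_i^\gamma}\frac{|\tilde u(x')-\tilde u(y')|^p}{\|x'-y'\|^{d+sp}}\di x'\di y'.
\end{equation*}

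Finally, since $g_i^\gamma=\inf_{Q_i^\gamma}g$ and $g$ is strictly positive, the pointwise inequality $(g_i^\gamma)^2\le g(x')g(y')$ holds throughout $Q_i^\gamma\times Q_i^\gamma$, and substituting it into the previous display produces exactly the right-hand side of the claim. No serious analytical obstacle is expected: the only point that requires attention is fixing the parameters so that $\varepsilon\le\beta\gamma$, which ensures that the mollification scale does not push mass outside $Q_i^\gamma$ and is automatic under the assumed hierarchy $\varepsilon\ll\beta\ll\gamma$.
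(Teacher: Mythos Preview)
Your argument is correct and matches the paper's proof essentially step for step: Jensen's inequality applied to the convolution, Fubini, the translation change of variables, the cube inclusion $Q_i^{(1-\beta)\gamma}-z\subset Q_i^\gamma$ for $z\in B_\varepsilon$, the normalization $\int\eta_\varepsilon=1$, and finally $(g_i^\gamma)^2\le g(x')g(y')$ on $Q_i^\gamma\times Q_i^\gamma$. Your explicit sufficient condition $\varepsilon\le\beta\gamma$ for the cube inclusion is exactly what the paper leaves implicit under ``$\varepsilon\ll\beta\ll\gamma$''.
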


\begin{proof}
Clearly, we can choose $\varepsilon$, $\beta$, and $\gamma$ such that
\begin{equation}
\label{eq:cube_trans_dil_inclusion}
Q_i^{(1-\beta)\gamma}+z \subset Q_i^{\gamma}
\end{equation}
for all $ z \in B_\varepsilon$ and all $i\in \gamma\mathbb{Z}^d$. Indeed, $$Q_i^{(1-\beta)\gamma}+z \subset \{ y \in \R^d : \dist(y,Q_i^{(1-\beta)\gamma}  ) < \varepsilon \} \subset Q_{i}^{(1-\beta) \gamma+ \varepsilon}$$ and therefore, to get \eqref{eq:cube_trans_dil_inclusion}, it is enough to choose $\varepsilon<\beta\gamma$ to ensure $(1-\beta)\gamma+\varepsilon<\gamma$.
Using the definition of convolution, Jensen's inequality, changing order of integration, performing the change of variables $x-z=\xi$ and $y-z=\zeta$, owing to~\eqref{eq:cube_trans_dil_inclusion}, changing once again order of integration, using the fact that $\eta_\varepsilon$ has unit $L^1$ norm, and finally owing to the definition of $f_i^{\gamma}=\inf_{Q_i^\gamma}f$, we obtain
\begin{align*}
\int_{Q_i^{(1-\beta) \gamma}} &
\int_{Q_i^{(1-\beta) \gamma}} \frac{|\tilde{u}_\varepsilon(x)- \tilde{u}_{\varepsilon}(y)|^p }{\|x-y\|^{d+sp}} (f_i^{\gamma})^2 \,\mathrm{d}x \,\mathrm{d}y 
\\
&
\leq 
\int_{Q_i^{(1-\beta) \gamma}} \int_{Q_i^{(1-\beta) \gamma}} \int_{B_\varepsilon} \frac{|\tilde{u}(x-z)- \tilde{u}(y-z)|^p }{\|x-z-(y-z)\|^{d+sp}} \eta_{\varepsilon}(z) (f_i^{\gamma})^2 \,\mathrm{d}z \,\mathrm{d}x \,\mathrm{d}y 
\\
&
\leq 
\int_{B_\varepsilon} \int_{Q_i^{(1-\beta) \gamma}+z} \int_{Q_i^{(1-\beta) \gamma}+z}  \frac{|\tilde{u}(\xi)- \tilde{u}(\zeta)|^p }{\|\xi-\zeta\|^{d+sp}} \eta_{\varepsilon}(z) (f_i^{\gamma})^2  \,\mathrm{d}\xi \,\mathrm{d}\zeta \,\mathrm{d}z
\\
&
\leq
\int_{Q_i^{\gamma}} \int_{Q_i^{\gamma}}  \frac{|\tilde{u}(\xi)- \tilde{u}(\zeta)|^p }{\|\xi-\zeta\|^{d+sp}} \,f(\xi)\,f(\zeta) \,\mathrm{d}\xi \,\mathrm{d}\zeta,
\end{align*}
concluding the proof.
\end{proof}

We are now ready to prove the $\liminf$ statement~\ref{item:G-liminf} in \cref{thmGammaconvs1}.

\begin{proof}[Proof of \texorpdfstring{\cref{thmGammaconvs1}\ref{item:G-liminf}}{Theorem~\ref{thmGammaconvs1}(ii)}]
Let $(u^n)_{n \in \N}\subset L^p(\Omega)$ be such that $u^n\to u$ in $L^p(\Omega)$ for some $u\in L^p(\Omega)$. 
As a consequence, $\sup_{n\in\N}\|u^n\|_{p}<\infty$. 
Furthermore, we can also assume that
\[
\liminf_{n\to \infty}(1-s_n) [\tilde u^n]^p_{s_n,p,f}<\infty,
\]
otherwise inequality~\eqref{linfsto1} is trivially true. Therefore, we can assume the validity of~\eqref{eq:equibdd}, which, in turn, implies that $u\in W^{1,p}_0(\Omega)$ for $p>1$, or $u\in BV(\Omega)$ for $p=1$. 
By \cref{marimba}, there exist $\varepsilon,\beta,\gamma>0$ with $\varepsilon \ll \beta \ll \gamma$ such that
\begin{equation}
\label{111220241}
\begin{split}
\int_{Q_i^{\gamma}} \int_{Q_i^{\gamma}}
&
\frac{|\tilde u^n(x)- \tilde u^n(y)|^p}{\|x-y\|^{d+s_np}} f(x)f(y) \,\mathrm{d}y \,\mathrm{d}x
\\
&\quad
\geq
\int_{Q_i^{(1-\beta) \gamma}} \int_{Q_i^{(1-\beta) \gamma}}
\frac{|(\tilde{u}^n)_\varepsilon(x)- (\tilde{u}^n)_{\varepsilon}(y)|^p}{\|x-y\|^{d+s_np}} (f_i^{\gamma})^2 \,\mathrm{d}y \,\mathrm{d}x,
\end{split}
\end{equation}
for every $ i \in \gamma\mathbb{Z}^d$ and $n \in \N$. 
We now perform a first-order Taylor expansion of 
$(\tilde{u}^n)_\varepsilon$.
Precisely, owing to the uniform bound on the $p$-norms granted by~\eqref{eq:equibdd} and the boundedness of $\Omega$, we can estimate
\begin{align}
|\nabla (\tilde{u}^n)_\varepsilon(x)\cdot(x-y)| 
&
\le
|(\tilde{u}^n)_\varepsilon(x)-(\tilde{u}^n)_\varepsilon(y)| + \frac 12 |\langle D^2(\tilde{u}^n)_\varepsilon (\xi) (x-y), (x-y) \rangle |
\nonumber
\\
&
\le
|(\tilde{u}^n)_\varepsilon(x)-(\tilde{u}^n)_\varepsilon(y)| +C(\Omega)\|u^n\|_{1}\,\|\eta_\varepsilon\|_{C^{2}(\R^d)}\,\|x-y\|^{2}
\nonumber
\\
&
\le
|(\tilde{u}^n)_\varepsilon(x)-(\tilde{u}^n)_\varepsilon(y)| +C(\varepsilon,M,\Omega)\|x-y\|^{2},
\label{eq:ornitottero}
\end{align}
where $\xi$ belongs to the segment from $x$ to $y$. 
Now, assuming $\|x-y\|$ small enough (which is always possible by taking $\gamma$ small enough), since $\tilde{u}^n_\varepsilon$ is locally Lipschitz, we have
\begin{align}
\label{bernard1} 
|(\tilde{u}^n)_\varepsilon(x)-(\tilde{u}^n)_\varepsilon(y)|
&+
C(\varepsilon,M,\Omega)\|x-y\|^{2}
\nonumber
\\
&
\le
\|\nabla(\tilde{u}^n)_\varepsilon\|_\infty \| x-y \| + C(\varepsilon,M,\Omega) \| x-y \|^2
\nonumber
\\
&
=
C(\Omega)\|u^n\|_1 \|\eta_\varepsilon\|_{C^1(\R^d)}\| x-y \| + C(\varepsilon,M,\Omega) \| x-y \|^2
\nonumber
\\
&
=
C(\varepsilon,M,\Omega) \| x-y \|.
\end{align}
Taking $p$-th powers in~\eqref{eq:ornitottero}, using that $(t_0+t)^p= t_0^p+p(t_0+\tau)^{p-1}t $ with $ \tau \in (0,t)$, and finally owing to~\eqref{bernard1}, we get
\[
\left|\nabla (\tilde{u}^n)_\varepsilon(x)\cdot(x-y)\right|^p \leq |(\tilde{u}^n)_\varepsilon(x)-(\tilde{u}^n)_\varepsilon(y)|^p +C(p,\varepsilon,M,\Omega)\|x-y\|^{p+1}.
\]
Therefore, plugging the above inequality in the inner integral in the right-hand side of~\eqref{111220241}, we get that
\begin{equation}
\label{eq:split_2_pieces}
\int_{Q_i^{(1-\beta)\gamma}} 
\frac{|(\tilde{u}^n)_\varepsilon(x)-(\tilde{u}^n)_\varepsilon(y)|^p}{\|x-y\|^{d+s_np}} (f_i^{\gamma})^2 \,\mathrm{d}y
\ge
I' + I'',
\end{equation}
having set
\begin{equation}
\label{eq:def_2_pieces}
\begin{split}
I'
&
=
\int_{Q_i^{(1-\beta)\gamma}}\frac{|\nabla (\tilde{u}^n)_\varepsilon(x)\cdot(x-y)|^p}{\|x-y\|^{d+s_np}}(f_i^{\gamma})^2 \,\mathrm{d}y,
\\
I''
&
=
- C \int_{Q_i^{(1-\beta)\gamma}}
\frac{\|x-y\|^{p+1}}{\|x-y\|^{d+s_np}} (f_i^{\gamma})^2 \,\mathrm{d}y.
\end{split}
\end{equation}
We now estimate the two terms in~\eqref{eq:def_2_pieces} separately.
On the one hand, we have
\begin{align*}
I''
&
\ge
- C(p,\varepsilon, M, \Omega) \|f\|^2_\infty \int_{B_{\delta_1}} \frac{\mathrm{d}\xi}{\|\xi\|^{d+s_np-p-1}}
\\
&
=
-C(p,\varepsilon, M, \Omega, f) d\omega_d \int_0^{\delta_1} \frac{\mathrm{d}\rho}{\rho^{s_np-p}}
=
-\frac{C(d,p,\varepsilon, M, \Omega, f)}{1+p-s_np} \delta_1^{1+p-s_np}
\end{align*}
for $x\in Q_i^{(1-\beta)\gamma}$, where we have set $\delta_1 = 2\sqrt{d}(1-\beta)\gamma$.
Integrating the above inequality over the cube $Q_i^{(1-\beta)\gamma}$ with respect to $x$, we obtain
\[
\int_{Q_i^{(1-\beta)\gamma}} I'' \,\mathrm{d}x
\ge
- \frac{C(d,p,\varepsilon, M, \Omega, f)}{1+p-s_np} \big|Q_i^{(1-\beta)\gamma}\big| \delta_1^{1+p-s_np},
\]
so that\begin{equation}
\label{eq:estimate-int-I''}
\liminf_{n \to  \infty} (1-s_n)\int_{Q_i^{(1-\beta)\gamma}} I'' \,\mathrm{d}x
\ge
0.
\end{equation}

On the other hand, calling $\delta_2=\delta_2(x)=\dist\big(x; \partial Q_i^{(1-\beta)\gamma}\big)$, taking the normalization $\nu(x)= \nabla (\tilde{u}^n)_\varepsilon(x)/\|\nabla (\tilde{u}^n)_\varepsilon(x)\|$, applying the change of variables $z=x-y$, exploiting the invariance of the integral on $B_{\delta_2}$ with respect to rotations, applying the change of variable $z\mapsto z\delta_2^{-1}$, and observing that, for every $\nu\in\mathbb S^{d-1}$ 
\begin{equation}
\label{eq:def2_Kdp}
(1-s_n)\int_{B_1} \frac{|z\cdot \nu|^p}{\|z\|^{d+s_np}} \,\mathrm{d}z = (1-s_n) \int_{0}^1 \rho^{p-s_n p -1}\, \int_{\partial B_1} \vert \eta \cdot \nu \vert^p \,\mathrm{d} \eta \, \mathrm{d} \rho = K_{d,p},
\end{equation}
we get that
\begin{align*}
I' 
&
=
|\nabla (\tilde{u}^n)_\varepsilon(x)|^p (f_i^{\gamma})^2 \int_{Q_i^{(1-\beta)\gamma}}\frac{|\nu(x)\cdot(x-y)|^p}{\|x-y\|^{d+s_np}} \,\mathrm{d}y
\\
&
\geq
|\nabla (\tilde{u}^n)_\varepsilon(x)|^p (f_i^{\gamma})^2 \int_{B_{\delta_2}} \frac{|\nu(x)\cdot z|^p}{\|z\|^{d+s_np}} \,\mathrm{d}z
= 
\frac{K_{d,p} |\nabla (\tilde{u}^n)_\varepsilon(x)|^p \delta_2^{p(1-s_n)}(f_i^{\gamma})^2 }{1-s_n}.
\end{align*}
Multiplying the above inequality by $(1-s_n)$, integrating over the cube $Q_i^{(1-\beta)\gamma}$ with respect to $x$, taking the $\liminf$ as $n\to\infty$, owing to Fatou's lemma, we get that
\[
\liminf_{n \to \infty}(1-s_n)
\int_{Q_i^{(1-\beta)\gamma}} I' \,\mathrm{d}x
\ge
K_{d,p} \int_{Q_i^{(1-\beta)\gamma}} \liminf_{n \to   \infty}\big( |\nabla (\tilde u^n)_\varepsilon(x)|^p \delta_2^{p(1-s_n)} \big) (f_i^{\gamma})^2\,\mathrm{d}x.
\]
Since $(u^n)_\varepsilon\to u_\varepsilon$ in any Sobolev norm as $n \to \infty$, up to passing to a suitable subsequence, $\nabla(\tilde u^n)_\varepsilon(x)\to\nabla\tilde u_\varepsilon(x)$ for a.e.\ $x\in\R^d$. 
Therefore, we get that
\begin{equation}
\label{eq:estimate-int-I'}
\liminf_{n \to   \infty}(1-s_n)
\int_{Q_i^{(1-\beta)\gamma}} I' \,\mathrm{d}x
\ge
K_{d,p} \int_{Q_i^{(1-\beta)\gamma}} |\nabla \tilde u_\varepsilon(x)|^p  (f_i^{\gamma})^2\,\mathrm{d}x.
\end{equation}
Thence, by taking the $\liminf$ as $n\to\infty$ of $(1-s_n)$ times ~\eqref{111220241} and owing to~\eqref{eq:split_2_pieces}--\eqref{eq:estimate-int-I'}, we have
\begin{equation*}
\begin{split}
\liminf_{n\to\infty}  (1-s_n)\int_{Q_i^{\gamma}} \int_{Q_i^{\gamma}} \frac{|\tilde u^n(x)- \tilde u^n(y)|^p}{\|x-y\|^{d+sp}}
&
f(x)f(y) \,\mathrm{d}y \,\mathrm{d}x
\\
&
\ge
K_{d,p} \int_{Q_i^{(1-\beta)\gamma}} |\nabla \tilde u_\varepsilon(x)|^p  (f_i^{\gamma})^2\,\mathrm{d}x
\end{split}
\end{equation*}
whenever $\varepsilon\ll\beta\ll\gamma\ll1$. 
Now first letting $\varepsilon \to 0^+$, and then $\beta\to 0^+$, we get
\begin{equation}
\label{eq:ineq_on_cubes}
\begin{split}
 \liminf_{n\to\infty}  (1-s_n)\int_{Q_i^{\gamma}} \int_{Q_i^{\gamma}} &  \frac{|\tilde u^n(x)- \tilde u^n(y)|^p}{\|x-y\|^{d+sp}}
f(x)f(y) \,\mathrm{d}y \,\mathrm{d}x
\\
&
\ge
K_{d,p} \int_{Q_i^\gamma} |\nabla \tilde u(x)|^p  (f_i^{\gamma})^2\,\mathrm{d}x, \quad \text{ if $ p \in (1,\infty)$},\\
\liminf_{n\to \infty}  (1-s_n)\int_{Q_i^{\gamma}} \int_{Q_i^{\gamma}} & \frac{|\tilde u^n(x)- \tilde u^n(y)|}{\|x-y\|^{d+s}}
f(x)f(y) \,\mathrm{d}y \,\mathrm{d}x
\\
&
\ge
K_{d,1} \int_{Q_i^\gamma}   (f_i^{\gamma})^2\,\mathrm{d} \vert D u \vert(x), \quad \text{ if $ p =1$}.
\end{split}
\end{equation}

We notice now that, by the Lebesgue's Dominated Convergence Theorem,
\begin{equation*}
    \begin{split}
  \lim_{\gamma \to 0^+} \sum_{i \in I_\gamma}	K_{d,p} \int_{Q_i^\gamma} |\nabla\tilde{u}(x)|^p  (f_i^{\gamma})^2 \,\mathrm{d}x 
= 
K_{d,p} \| \nabla u  \|^p_{p,f^2}    
\\
\text{and}\quad
 \lim_{\gamma \to 0^+} \sum_{i \in I_\gamma}	K_{d,1} \int_{Q_i^\gamma}   (f_i^{\gamma})^2 \,\mathrm{d} \vert D u \vert  
= 
K_{d,1} \| D u  \|_{1,f^2}. 
    \end{split}
\end{equation*}
Pairing this with~\eqref{eq:ineq_on_cubes} allows to conclude. Indeed, if $p \in (1,\infty)$ (the case $p =1$ being analogous and thus omitted), then we have 
\begin{align*}
K_{d,p} 
&
\int_{\Omega} \vert \nabla u(x) \vert^p  f^2(x)\,\mathrm{d}x
\\
&
\le
\lim_{\gamma \to0^+}
\sum_{i \in I_\gamma}
\liminf_{n \to  \infty}  (1-s_n)\int_{Q_i^{\gamma}} \int_{Q_i^{\gamma}} \frac{|\tilde u^n(x)- \tilde u^n(y)|^p}{\|x-y\|^{d+s_np}}
f(x)f(y) \,\mathrm{d}y \,\mathrm{d}x
\\
&
=
\lim_{\gamma \to  0^+}\liminf_{n \to  \infty} \sum_{i \in I_\gamma}(1-s_n)\int_{Q_i^{\gamma}} \int_{Q_i^{\gamma}} \frac{|\tilde u^n(x)- \tilde u^n(y)|^p}{\|x-y\|^{d+s_np}}
f(x)f(y) \,\mathrm{d}y \,\mathrm{d}x
\\
&
\le
\lim_{\gamma \to  0^+}\liminf_{n \to  \infty} (1-s_n)\int_{\Omega} \int_{\Omega} \frac{|\tilde u^n(x)- \tilde u^n(y)|^p}{\|x-y\|^{d+s_np}}
f(x)f(y) \,\mathrm{d}y \,\mathrm{d}x
\\
&
=
\liminf_{n \to  \infty}(1-s_n) \int_{\R^d} \int_{\R^d} \frac{|\tilde u^n(x)- \tilde u^n(y)|^p}{\|x-y\|^{d+s_np}}
f(x)f(y) \,\mathrm{d}y \,\mathrm{d}x,
\end{align*}
which is exactly the claimed inequality.
\end{proof}

\subsection{Proof of \texorpdfstring{\cref{thmGammaconvs1}\ref{item:G-limsup}}{Theorem 3.1(iii)}}
\label{subsec:limsup}

We begin with the following preliminary result, establishing the $\limsup$ inequality~\ref{item:G-limsup} in \cref{thmGammaconvs1} for smooth functions supported in~$\Omega$.

\begin{theorem}
\label{teocitaziones1limsup}
If $v \in C_c^{\infty}(\mathbb{R}^d)$, then
\begin{equation}
\label{convpunt}
\begin{split}
\lim_{s \to  1^-} (1-s) \int_{\mathbb{R}^d} \int_{\mathbb{R}^d} \frac{|v(x)-v(y)|^p}{\|x-y\|^{d+sp}}
&
f(x)f(y)\,\mathrm{d}x\,\mathrm{d}y
\\
&
=
K_{d,p}\int_{\R^d}\|\nabla v(z)\|^p f^2(z) \,\mathrm{d}z.
\end{split}
\end{equation}
\end{theorem}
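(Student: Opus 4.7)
The plan is to reduce the double integral to a single one via the change of variables $h=y-x$, Taylor expand $v$, then swap $f(x+h)$ for $f(x)$ and compute everything in polar coordinates. After substituting, the quantity under the limit becomes
\[
(1-s)\int_{\R^d}\int_{\R^d}\frac{|v(x)-v(x+h)|^p}{\|h\|^{d+sp}}\,f(x)f(x+h)\di h\di x.
\]
I would split the $h$-integral at $\|h\|=1$. On $\{\|h\|>1\}$, since $|v(x)-v(x+h)|^p\le(2\|v\|_\infty)^p$ and the integrand vanishes unless at least one of $x$, $x+h$ lies in $\supp v$, the $x$-integral is bounded by $2|\supp v|\,(2\|v\|_\infty)^p\|f\|_\infty^2$. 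Coupling this with $\int_{\|h\|>1}\|h\|^{-d-sp}\di h=\frac{d\omega_d}{sp}$ and the prefactor $(1-s)$ shows that this tail contribution vanishes as $s\to1^-$.

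On $\{\|h\|\le 1\}$, I would use the Taylor expansion $v(x+h)-v(x)=\nabla v(x)\cdot h+r(x,h)$ with $|r(x,h)|\le \tfrac12\|D^2v\|_\infty\|h\|^2$. Combined with the elementary inequality $\bigl||a+b|^p-|a|^p\bigr|\le p(|a|+|b|)^{p-1}|b|$ and the uniform bound $|\nabla v(x)\cdot h|\le\|\nabla v\|_\infty\|h\|$, one gets
\[
\bigl||v(x)-v(x+h)|^p-|\nabla v(x)\cdot h|^p\bigr|\le C(v,p)\,\|h\|^{p+1}
\qquad\text{for }\|h\|\le 1.
\]
Since the integrand is supported (in $x$) in the $1$-neighborhood of $\supp v$ and $\int_{B_1}\|h\|^{p+1-d-sp}\di h=\frac{d\omega_d}{p+1-sp}$ stays bounded as $s\to1^-$, multiplying by $(1-s)$ makes this error disappear. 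Next, using that $f$ is Lipschitz, $|f(x)f(x+h)-f^2(x)|\le\|f\|_\infty\Lip(f)\|h\|$, so the same kind of estimate shows that replacing $f(x+h)$ with $f(x)$ produces a further error of order $(1-s)\int_0^1\rho^{p-sp}\di\rho=O(1-s)$.

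What remains is
\[
(1-s)\int_{\R^d}f^2(x)\int_{B_1}\frac{|\nabla v(x)\cdot h|^p}{\|h\|^{d+sp}}\di h\di x.
\]
Passing to polar coordinates $h=\rho\omega$, the radial and spherical pieces factor:
\[
(1-s)\int_0^1\rho^{p(1-s)-1}\di\rho=\frac1p,
\qquad
\int_{\partial B_1}|\nabla v(x)\cdot\omega|^p\di\h^{d-1}(\omega)=\|\nabla v(x)\|^p\int_{\partial B_1}|\omega\cdot\mathrm{e}_d|^p\di\h^{d-1}(\omega),
\]
where the second identity follows from the rotation invariance of $\h^{d-1}\restriction\partial B_1$. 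Recalling~\eqref{eq:def_Kdp}, the spherical integral equals $pK_{d,p}\|\nabla v(x)\|^p$, and combining the two factors one obtains precisely $K_{d,p}\int_{\R^d}\|\nabla v(x)\|^p f^2(x)\di x$.

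The main obstacle is the careful bookkeeping of three independent error terms (the far-field tail, the Taylor remainder, and the Lipschitz swap of the weight) and verifying that each of them is killed by the $(1-s)$ prefactor; no individual estimate is hard, but they must be combined so that each remains finite uniformly in $s$ close to $1^-$.
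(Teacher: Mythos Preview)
Your proposal is correct and follows essentially the same strategy as the paper: split at $\|x-y\|=1$, use the Lipschitz property of $f$ to replace $f(x)f(y)$ by $f^2$, Taylor expand $v$, and compute the leading term in polar coordinates. The paper organizes the same three error terms as $I_3$, $I_2$, $I_1''$ and the main contribution as $I_1'$; your Taylor-remainder bound $\bigl||a+b|^p-|a|^p\bigr|\le p(|a|+|b|)^{p-1}|b|$ is in fact sharper than the paper's stated $O(\|x-y\|^{2p})$, but both lead to the same conclusion.
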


\begin{proof}
Let us write
\begin{equation}
\label{asdasd1}
\int_{\R^d} \int_{\R^d} \frac{|v(x)-v(y)|^p}{\|x-y\|^{d+sp}} f(x) f(y) \,\mathrm{d}x \,\mathrm{d}y
=
I_1+I_2+I_3,
\end{equation}
where 
\begin{equation}
\label{asdasd2}
\begin{split}
I_1
&
= 
\int_{\R^d} \int_{\{ \|x-y\| < 1 \}} \frac{|v(x)-v(y)|^p}{\|x-y\|^{d+sp}} f^2(y) \,\mathrm{d}x \,\mathrm{d}y,
\\
I_2
&
=
\int_{\R^d} \int_{\{ \|x-y\| < 1 \}} \frac{|v(x)-v(y)|^p}{\|x-y\|^{d+sp}} f(y)(f(x)-f(y)) \,\mathrm{d}x \,\mathrm{d}y,
\\
I_3
&
= 
\int_{\R^d} \int_{\{ \|x-y\| \geq 1 \}} \frac{|v(x)-v(y)|^p}{\|x-y\|^{d+sp}} f(x) f(y) \,\mathrm{d}x \,\mathrm{d}y.
\end{split}
\end{equation}
We estimate the three terms separately.
Concerning $I_3$, we have that
\begin{align}
I_3
& 
\le
2^p\|f\|^2_\infty \int_{\R^d} \int_{\{ \|x-y\| \geq 1\}} \frac{|v(x)|^p+|v(y)|^p}{\|x-y\|^{d+sp}} \,\mathrm{d}x \,\mathrm{d}y 
\nonumber
\\
& 
\leq 
2^{p+1}\| f \|^2_{\infty}\int_{\R^d} \int_{B_1(y)^c} \frac{|v(y)|^p}{\|x-y\|^{d+sp}} \,\mathrm{d}x \,\mathrm{d}y 
\nonumber
\\
&
=
2^{p+1}\| f \|^2_{\infty} \| v \|^p_{p} \int_{1}^{\infty} \frac{d \omega_d}{\rho^{1+sp}} \,\mathrm{d} \rho 
=
\frac{2^{p+1}}{sp} \| f \|^2_{\infty} \|v\|^p_{p}. 
\label{asdasd3}
\end{align}
For $I_2$ instead, we can write, recalling the notation in~\eqref{ingrass},
\begin{align}
I_2 
&
\leq 
\| f \|_{\infty} \| \nabla f \|_{\infty}\int_{\R^d} \int_{B_1(y)} \frac{|v(x)-v(y)|^p}{\|x-y\|^{d+sp-1}} \,\mathrm{d}x \,\mathrm{d}y
\nonumber
\\
&
\leq  
C(f) \| \nabla v \|_{\infty}^p  \int_{(\supp{v})_1} \int_{B_1(y)} \frac{1}{\|x-y\|^{d-1-p+sp}} \,\mathrm{d}x \,\mathrm{d}y
\nonumber
\\
& 
\leq  
C(f,v) d\omega_d |(\supp{v})_1| \int_{0}^{1} \frac{1}{\rho^{-p+sp}} \,\mathrm{d}\rho
\nonumber
\\
&
\leq
\frac{C(d,f,v)}{p-sp+1}
\le
C(d,p,f,v).
\label{asdasd6}
\end{align}
We are thus left with estimating $I_1$. 
To this aim, let us observe that
\begin{equation*}
\vert v(x)-v(y) \vert^p 
\le
\vert \nabla v(y) \cdot(x-y) \vert^p+  \| D^2 v \|_{\infty}^{2p} \, \|x-y\|^{2p}.
\end{equation*}
Thus $I_1\le I'_1+I''_1$, where, owing to the fact that $v$ has compact support and recalling the notation in~\eqref{ingrass},
\begin{equation*}
\begin{split}
I'_1
&
= 
\int_{\R^d} \int_{B_1(y)} \frac{\vert \nabla v(y) \cdot (x-y)\vert^p}{\|x-y\|^{d+sp}} f^2(y) \,\mathrm{d}x \,\mathrm{d}y,
\\
I''_1
&
= 
\int_{(\supp{v})_1} \int_{B_1(y)} \frac{\| D^2 v \|_{\infty}^{2p}}{\|x-y\|^{d+sp-2p}} f^2(y) \,\mathrm{d}x \,\mathrm{d}y.
\end{split}
\end{equation*}
Now, on the one hand, we have that
\begin{align}
I''_1 
&
\leq
\|f\|_{\infty}^2 \|D^2 v\|_{\infty}^{2p} \int_{(\supp{v})_1}\int_{B_1(y)} \frac{1}{\|x-y\|^{d+sp-2p}}\,\mathrm{d}x \,\mathrm{d}y
\nonumber
\\
&
\leq
C(f,v) d\omega_d |(\supp{v})_1| \int_{0}^{1}\rho^{p-1+p(1-s)} \,\mathrm{d}\rho
\nonumber\\
&
=
\frac{C(d,f,v)}{2p-sp} \le C(d,p,f,v).
\label{asdasd9}
\end{align}
By the non-negativity of $I''_1,I_2,I_3$, and by~\eqref{asdasd3}--\eqref{asdasd9}, we thus get that 
\begin{equation}
\label{eq:vanishing_pt}
\lim_{s\to 1^-}(1-s)(I''_1+I_2+I_3) = 0.
\end{equation}
Therefore, in order to conclude, we are left with showing that $\lim_{s\to1^-} (1-s)I'_1$ equals the right hand side of~\eqref{convpunt}. 
Indeed, by the change of variables $z=x-y$, by setting $\nu(y)=\nabla v(y)/\|\nabla v(y)\|$, and using \eqref{eq:def2_Kdp}, we have
\begin{align}
{I'}_1 
&
=
\int_{\R^d} \int_{B_1} \frac{|\nabla v(y) \cdot z|^p}{\|z\|^{d+sp}} f^2(y) \,\mathrm{d}z \,\mathrm{d}y
\nonumber
\\
&
=
\int_{\R^d} \|\nabla v(y)\|^p f^2(y)\int_{B_1} \frac{|\nu(y)\cdot z|^p}{\|z\|^{d+sp}} \,\mathrm{d}z \,\mathrm{d}y
\nonumber
\\
&
= 
\frac{K_{d,p}}{(1-s)}
\int_{\R^d}\|\nabla v(x)\|^p f^2(y)\, \mathrm{d}y.
\label{asdasd10}
\end{align}
The conclusion hence follows by combining~\eqref{eq:vanishing_pt} and~\eqref{asdasd10}.
\end{proof}

\begin{remark}
We underline that, in the chain of inequalities leading to~\eqref{asdasd6}, the Lipschitz regularity of the weight $f$ is crucial in order to ensure the finiteness of the integral in $\rho$. 
If $f$ were only $\alpha$-H\"older, for some $\alpha\in(0,1)$, one would end up with $\rho$ to the power $-(\alpha-1-p+sp)$, which is not integrable in a neighborhood of the origin.
\end{remark}

We are now ready to prove the $\limsup$ statement~\ref{item:G-limsup} in \cref{thmGammaconvs1}.

\begin{proof}[Proof of \texorpdfstring{\cref{thmGammaconvs1}\ref{item:G-limsup}}{Theorem~\ref{thmGammaconvs1}(iii)}]
Let $u\in W^{1,p}_0(\Omega)$ for $p>1$, or $ u \in BV(\Omega)$ for $p=1$. 
By the density of $C_c^{\infty}(\Omega)$ in  $W^{1,p}_0(\Omega)$ for $p>1$, or in $BV(\Omega)$ for $p=1$, we can find $(v^k)_{k\in\N} \subset C_c^{\infty}(\Omega)$ such that $ v^{k} \to  u$ in $W^{1,p}_0(\Omega)$ for $p>1$, or in energy in $BV(\Omega)$ for $p=1$. 
In view of \cref{teocitaziones1limsup}, we thus get that 
\begin{equation*}
\begin{aligned} 
\lim_{n \to  \infty} (1-s_n) [\tilde v^k]^p_{s_n,p,f}
&
=
\lim_{n\to\infty} (1-s_n) \int_{\mathbb{R}^d} \int_{\mathbb{R}^d} \frac{|\tilde v^k(x)-\tilde v^k(y)|^p}{\|x-y\|^{d+s_np}}f(x) f(y)\,\mathrm{d}x \,\mathrm{d}y
\\
&
=  
K_{d,p} \int_{\R^d} \|\nabla \tilde v^k(x)\|^2 f^2(x)\,\mathrm{d}x
\\
&
= 
K_{d,p} \int_{\Omega} \|\nabla v^k(x)\|^p f^2(x) \,\mathrm{d}x
\end{aligned}
\end{equation*}
for every $k\in\N$.
The conclusion hence follows by a standard diagonal argument.
\end{proof}

\subsection{Proof of \texorpdfstring{\cref{thmGammaconvs1stab}}{Theorem 1.1}}
\label{subsec:stab}

We can now prove \cref{thmGammaconvs1stab} in full generality.
Indeed, the result easily follows by combining \cref{thmGammaconvs1} with
\cref{Lem108012025,Lem208012025} below (under the same standing assumptions as stated at the very beginning of \cref{sec:stab}).

\begin{lemma}
\label{Lem108012025}
If $(u^n)_{n\in\N}\subset L^p(\Omega)$ is such that 
\begin{equation}
\label{eq:bound_lemmetto_seq_pesi}
\sup_{n \in \N}\left( (1-s_n)[\tilde u^n]^p_{s_n,p}+ \| u^n \|_{L^p(\Omega)}^p\right)<\infty,
\end{equation} 
then
\begin{equation}
\label{bernardTesi}
\lim_{n\to\infty}
(1-s_n)\big| [\tilde u^n]^p_{s_n,p,f_n}-[\tilde u^n]^p_{s_n,p,f} \big|=0.
\end{equation}
\end{lemma}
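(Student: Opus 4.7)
The proof is a direct estimate exploiting the uniform convergence $f_n \to f$ in $L^\infty(\R^d)$. The starting observation is the algebraic identity
\begin{equation*}
f_n(x)f_n(y) - f(x)f(y)
=
f_n(x)\bigl(f_n(y)-f(y)\bigr) + \bigl(f_n(x)-f(x)\bigr)f(y),
\end{equation*}
which, by the triangle inequality, yields the pointwise bound
\begin{equation*}
\bigl|f_n(x)f_n(y) - f(x)f(y)\bigr|
\le
\bigl(\|f_n\|_\infty+\|f\|_\infty\bigr)\,\|f_n-f\|_\infty.
\end{equation*}
Since $f_n\to f$ in $L^\infty(\R^d)$, the sequence $(\|f_n\|_\infty)_{n\in\N}$ is bounded, so there is a constant $M=M(f)>0$ with $\|f_n\|_\infty+\|f\|_\infty\le M$ for all $n$ large.

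Next, I would use this estimate inside the definition~\eqref{eq:spf_seminorm} of the weighted seminorm. Since the integrand in the definition of $[\tilde u^n]^p_{s_n,p,g}$ depends linearly on the kernel $g(x)g(y)$, we obtain directly
\begin{equation*}
\bigl|[\tilde u^n]^p_{s_n,p,f_n} - [\tilde u^n]^p_{s_n,p,f}\bigr|
\le
M\,\|f_n-f\|_\infty
\int_{\R^d}\int_{\R^d}\frac{|\tilde u^n(x)-\tilde u^n(y)|^p}{\|x-y\|^{d+s_np}}\di x\di y
=
M\,\|f_n-f\|_\infty\,[\tilde u^n]^p_{s_n,p}.
\end{equation*}
Multiplying by $(1-s_n)$ gives
\begin{equation*}
(1-s_n)\bigl|[\tilde u^n]^p_{s_n,p,f_n} - [\tilde u^n]^p_{s_n,p,f}\bigr|
\le
M\,\|f_n-f\|_\infty \cdot (1-s_n)[\tilde u^n]^p_{s_n,p}.
\end{equation*}
By the equiboundedness assumption~\eqref{eq:bound_lemmetto_seq_pesi}, the factor $(1-s_n)[\tilde u^n]^p_{s_n,p}$ is uniformly bounded in $n$, while $\|f_n-f\|_\infty\to 0$ by hypothesis. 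Hence the right-hand side vanishes as $n\to\infty$, yielding~\eqref{bernardTesi}.

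\textbf{Main difficulty.} There is essentially no obstacle: the result is a straightforward consequence of the linearity of the seminorm in the weight kernel combined with the $L^\infty$-convergence $f_n\to f$, and the boundedness hypothesis is tailor-made to absorb the unweighted seminorm factor $(1-s_n)[\tilde u^n]^p_{s_n,p}$.
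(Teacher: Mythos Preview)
Your proof is correct and follows essentially the same approach as the paper's: both bound $|f_n(x)f_n(y)-f(x)f(y)|$ uniformly by a constant times $\|f_n-f\|_\infty$, pull this factor out of the double integral to produce $[\tilde u^n]^p_{s_n,p}$, and conclude via the equiboundedness hypothesis~\eqref{eq:bound_lemmetto_seq_pesi}. The only cosmetic difference is that you write out the algebraic splitting $f_n f_n - ff = f_n(f_n-f)+(f_n-f)f$ explicitly, whereas the paper states the resulting sup bound directly.
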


\begin{proof}
Since we can estimate
\begin{align*}
      \big| [\tilde u^n]^p_{s_n,p,f_n}-[\tilde u^n]^p_{s_n,p,f} \big| 
      &
      \leq
      [\tilde u^n]^p_{s_n, p} \sup_{(x,y) \in \R^{2d}} \vert f_n(x)f_n(y)-f(x)f(y)\vert
      \\
      &
      \leq
      2 [\tilde u^n]^p_{s_n, p}  \sup\{ \| f_n \|_{\infty} : n \in \N\}\| f_n -f \|_{\infty},
\end{align*}
the conclusion immediately follows from~\eqref{eq:bound_lemmetto_seq_pesi}.
\end{proof}

\begin{lemma}
\label{Lem208012025}
If $(u^n)_{n\in\N}\subset L^p(\Omega)$ is such that
\begin{equation}
\label{quprovo1}
\sup_{n \in \N}\left( (1-s_n)[\tilde u^n]^p_{s_n,p,f_n}+ \|u^n \|_{L^p(\Omega)}^p\right)<\infty, 
\end{equation}
then
\begin{equation*}
\sup_{n \in \N} (1-s_n)[\tilde u^n]^p_{s_n,p}+ \| u^n \|_{L^p(\Omega)}^p
< 
\infty.
\end{equation*}
\end{lemma}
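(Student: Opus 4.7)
My plan is to dominate the unweighted seminorm by the weighted one plus a controlled remainder in $\|u^n\|_p^p$, exploiting both the strict positivity of $f$ on bounded sets and the uniform convergence $f_n\to f$ in $L^\infty(\R^d)$. Since $f\in\mathrm{Lip}_b(\R^d;(0,\infty))$ is continuous and strictly positive on the bounded set $\Omega_1$ (see the notation in~\eqref{ingrass}), the constant $c:=\inf_{\Omega_1}f$ is strictly positive. By the uniform convergence $f_n\to f$, there exists $n_0\in\N$ such that $f_n\ge c/2$ on $\Omega_1$ for every $n\ge n_0$. I then split $\R^d\times\R^d$ as the disjoint union of $\Omega_1\times\Omega_1$, the two mixed ``halos'' $\Omega_1\times\Omega_1^c$ and $\Omega_1^c\times\Omega_1$, and $\Omega_1^c\times\Omega_1^c$, and estimate the contribution of each piece to the integral defining $[\tilde u^n]^p_{s_n,p}$ separately.

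On $\Omega_1\times\Omega_1$ the pointwise inequality $f_n(x)f_n(y)\ge c^2/4$ immediately yields
\[
\int_{\Omega_1\times\Omega_1}\frac{|\tilde u^n(x)-\tilde u^n(y)|^p}{\|x-y\|^{d+s_np}}\di x\di y\le\frac{4}{c^2}\,[\tilde u^n]^p_{s_n,p,f_n}.
\]
On each halo region, the integrand is nonzero only when one variable lies in $\Omega$ and the other in $\Omega_1^c$, in which case $\|x-y\|\ge\dist(\Omega,\Omega_1^c)\ge 1$ and the outer variable contributes a vanishing $\tilde u^n$. A direct polar computation then gives
\[
\int_{\R^d\setminus\Omega_1}\frac{\di y}{\|x-y\|^{d+s_np}}\le\int_{\|x-y\|\ge 1}\frac{\di y}{\|x-y\|^{d+s_np}}=\frac{d\,\omega_d}{s_np},
\]
so the total halo contribution is bounded by $\tfrac{2d\,\omega_d}{s_np}\|u^n\|_p^p$. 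Finally, on $\Omega_1^c\times\Omega_1^c$ both factors of $\tilde u^n$ vanish, so the contribution is zero.

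Collecting these estimates and multiplying by $(1-s_n)$, I obtain, for every $n\ge n_0$,
\[
(1-s_n)[\tilde u^n]^p_{s_n,p}\le\frac{4}{c^2}\,(1-s_n)[\tilde u^n]^p_{s_n,p,f_n}+\frac{2d\,\omega_d\,(1-s_n)}{s_np}\,\|u^n\|_p^p,
\]
whose right-hand side is uniformly bounded by hypothesis~\eqref{quprovo1} (using also $(1-s_n)/s_n\le 2$ for $s_n\ge\tfrac12$); the finitely many initial indices do not affect the supremum. The only delicate point is that $f$ need \emph{not} be bounded away from zero on the whole of $\R^d$, so a global pointwise bound $f_n\gtrsim 1$ is unavailable; this is circumvented by isolating the bulk contribution on $\Omega_1\times\Omega_1$ and exploiting the uniform separation $\|x-y\|\ge 1$ that kicks in on the remaining halo regions.
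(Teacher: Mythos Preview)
Your proof is correct and follows essentially the same approach as the paper: both control the ``near'' contribution via the uniform lower bound $f_n\gtrsim c$ on the bounded set $\Omega_1$ and the ``far'' contribution by $\|u^n\|_p^p$ through the polar integral $\int_1^\infty \rho^{-1-s_np}\,\mathrm{d}\rho=\tfrac{1}{s_np}$. The only cosmetic difference is the decomposition---you split $\R^d\times\R^d$ by domain ($\Omega_1\times\Omega_1$ versus the halos) while the paper splits by distance ($\|x-y\|<1$ versus $\|x-y\|\ge 1$)---but the underlying estimates are identical.
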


\begin{proof}
On the one hand, by the inequality $(a+b)^p\le 2^p(a^p+b^p)$ for $a,b\ge0$, exploiting symmetry, and passing to the $d$-dimensional spherical coordinates, we can estimate
\begin{equation}
\begin{split}
\int_{\R^d} \int_{B_1(y)^c}
&
\frac{|\tilde{u}^n(x)-\tilde{u}^n(y)|^p}{\|x-y\|^{d+s_n p}}  \,\mathrm{d}x \,\mathrm{d}y 
\le
2^p \int_{\R^d} \int_{B_1(y)^c} \frac{|\tilde u^n (x)|^p+|\tilde{u}^n(y)|^p}{\|x-y\|^{d+s_n p}} \,\mathrm{d}x \,\mathrm{d}y
\\
& 
\leq 
2^{p+1}\int_{\R^d} |\tilde u^n(y)|^p \int_{B_1(y)^c} \frac{\mathrm{d}x}{\|x-y\|^{d+s_n p}} \,\mathrm{d}y 
\\
&
=
2^{p+1} \| u^n \|^p_{p} \int_{1}^{\infty} \frac{d\omega_d}{\rho^{1+s_n p}} \,\mathrm{d} \rho 
=
\frac{C(d, p) \|u^n\|^p_p}{s_n}
\le
C(d, p)  \|u^n\|^p_p
\end{split}
\label{bern1234}
\end{equation}
for all $n\in\N$, since $s_n\to 1^-$, thus we can assume $s_n > \frac{1}{2}$.
On the other hand, recalling the notation in~\eqref{ingrass}, since $\tilde u^n(y)$ is supported on~$\Omega$, then $\tilde{u}^n(x)$ for $x\in B_1(y)$ is supported on $\Omega_1$, and since $f_n\to f$ uniformly and $f>0$, we have
\begin{equation}
\begin{split}
        \int_{\R^d} \int_{B_1(y)} 
        &\frac{|\tilde{u}^n(x)-\tilde{u}^n(y)|^p}{\|x-y\|^{d+s_n p}}  \,\mathrm{d}x \,\mathrm{d}y
        =
        \int_{\Omega_1} \int_{\Omega_1} \frac{|\tilde{u}^n(x)-\tilde{u}^n(y)|^p}{\|x-y\|^{d+s_n p}}  \,\mathrm{d}x \,\mathrm{d}y
        \\
        & 
        \leq
        \int_{\Omega_1} \int_{\Omega_1} \frac{|\tilde{u}^n(x)-\tilde{u}^n(y)|^p}{\|x-y\|^{d+s_n p}}  \frac{f_n(x)f_n(y)}{ \inf_{ (x,n) \in \Omega_1 \times \N } f_n^2(x)} \,\mathrm{d}x \,\mathrm{d}y 
        \\
        & 
        = 
        \frac{[\tilde u^n]^p_{s_n,p,f_n}}{\inf_{ (x,n) \in \Omega_1 \times \N } f_n^2(x)}
=
C(\Omega_1, f_n) [\tilde u^n]^p_{s_n,p,f_n}.
\end{split}
\label{bern123}
\end{equation}
Since $f_n\to f$ uniformly and $f>0$, the constant $C$ can be made independent of $n$, and only dependent on the uniform limit $f$. The conclusion hence follows by combining~\eqref{bern1234} and~\eqref{bern123}, multiplying by $(1-s_n)$, and owing to the assumption~\eqref{quprovo1}.
We omit the plain details.
\end{proof}

\section{Proof of \texorpdfstring{\cref{11genstab}}{Theorem 1.2}}\label{sec:flow}

\subsection{Stability of Hilbertian gradient flows}

We briefly recall some abstract machinery from~\cite{CdLKNP23} concerning Hilbertian gradient flows.

Let $\h$ be a Hilbert space endowed with scalar product $\langle\cdot,\cdot\rangle_\h$ and  norm $\|\cdot\|_\h$.
Given $\F \colon \h \to (-\infty,+\infty]$, we let $\D(\F) = \{\, x\in\h \colon \F(x)< + \infty\, \}$ and
\begin{equation*}
\partial\F(x)
= 
\left\{\,v\in\h \colon \liminf_{y\to x} \frac{\F(y)-\F(x)-\langle v,y-x\rangle_\h}{\|y-x\|_\h }\ge0 \,\right\}
\end{equation*}
be the \emph{subdifferential} of $\F$ at $x\in\D(\F)$.

We recall the following result, which is a particular case of~\cite{CdLKNP23}*{Prop.~3.7}. 
Here and below, given any vector space $\mathscr{V}$, we let $\mathscr{V}^*$ be the algebraic dual space of $\mathscr{V}$ and $\mathscr{V}'$ the topological dual space of $\mathscr{V}$. 

\begin{proposition}
\label{singleton}
Let $\K$ be a dense subspace of $\h$.
Let $\F\colon\h\to(-\infty,+\infty]$ be a proper, convex, and strongly lower semicontinuous functional and let $x\in\D(\F)$.  
If there exists $T\in\K^*$ such that
\[
\lim_{t\to 0}\frac{\F(x+ty)-\F(x)}{t}=T(y),\quad\textrm{for every }y\in\K\,,
\]
then either $\partial\F(x)=\emptyset$ or $\partial\F(x)=\{v\}$, where $v$ is the (unique) element in $\h$  satisfying $T(y)=\langle v,y\rangle_\h$ for every $y\in\K$. 
In particular, $T\in \K'$ and $v$ is its  unique continuous extension to $\h'$.
\end{proposition}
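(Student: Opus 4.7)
The plan is to show directly that every element of $\partial\F(x)$ must coincide with the unique Hilbert representative associated with $T$, whence $\partial\F(x)$ is either empty or a singleton. First, I would exploit that $\F$ is proper, convex, and strongly lower semicontinuous, so that its Fréchet subdifferential as written in the excerpt coincides with the usual convex subdifferential
\[
\partial\F(x)=\{v\in\h : \F(z)\ge\F(x)+\langle v,z-x\rangle_\h\ \text{for every } z\in\h\}.
\]
Assuming $v\in\partial\F(x)$, the choice $z=x+ty$ with $y\in\K$ and $t>0$, after dividing by $t$ and letting $t\to 0^+$, yields $T(y)\ge\langle v,y\rangle_\h$; the choice $t<0$ reverses the sign in the difference quotient and, as $t\to 0^-$, produces $T(y)\le\langle v,y\rangle_\h$. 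Combining the two, $T(y)=\langle v,y\rangle_\h$ for every $y\in\K$.

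Next, I would use this identity to extract simultaneously the uniqueness of $v$ and the continuous-extension claim. The bound $|T(y)|=|\langle v,y\rangle_\h|\le\|v\|_\h\|y\|_\h$ for all $y\in\K$ shows that $T$ is continuous on $\K$, hence $T\in\K'$; the density of $\K$ in $\h$ and the Riesz representation theorem then provide a unique continuous extension of $T$ to $\h'$, represented by exactly one element $v\in\h$. Since every element of $\partial\F(x)$ must agree with $T$ on $\K$ via the identity just derived, and since $\K$ is dense in $\h$, any two such elements must coincide everywhere in $\h$. Therefore $\partial\F(x)$ is either empty or the singleton $\{v\}$, which is exactly the claim.

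I do not anticipate any genuine obstacle: the argument is a routine application of the convex subdifferential inequality combined with two-sided passage to the limit in the difference quotient, followed by the elementary extension argument for bounded linear functionals defined on a dense subspace of a Hilbert space. The only point requiring mild care is that the hypothesis furnishes a \emph{two-sided} limit as $t\to 0$, which is precisely what is needed to get both $T(y)\ge\langle v,y\rangle_\h$ and $T(y)\le\langle v,y\rangle_\h$; a one-sided limit would in general yield only one of the two inequalities, leaving room for a larger, genuinely set-valued subdifferential.
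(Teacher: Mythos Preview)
Your argument is correct. Note, however, that the paper does not actually supply a proof of this proposition: it is quoted verbatim as a particular case of \cite{CdLKNP23}*{Prop.~3.7} and used as a black box, so there is no ``paper's own proof'' to compare against. Your approach---identify the Fr\'echet subdifferential with the convex one, test the subgradient inequality along rays $x+ty$ for $y\in\K$, and pass to the two-sided limit to pin down $\langle v,y\rangle_\h=T(y)$ on the dense subspace---is the standard one and is essentially what one finds in the cited reference.

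One minor remark on your closing comment: since $\K$ is a \emph{subspace} and $T$ is assumed linear, the one-sided limit $t\to0^+$ would already suffice; applying it to both $y$ and $-y$ yields $T(y)\ge\langle v,y\rangle_\h$ and $-T(y)=T(-y)\ge-\langle v,y\rangle_\h$, hence equality. The two-sidedness of the hypothesis is therefore not the crucial ingredient you suggest, though of course it does no harm.
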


We also recall the following stability result, which is contained in~\cite{CdLKNP23}*{Thm.~3.8}.

\begin{theorem}
\label{genstab}
Let $\F_n\colon\h\to(-\infty,+\infty]$ be a proper, strongly lower semicontinuous, convex, and positive functional for every $n\in\N$.
Assume the following:
\begin{enumerate}[label=(\alph*),itemsep=1ex]

\item 
$(\F_n)_{n\in\N}$ $\Gamma$-converges to some proper functional $\F_\infty\colon\h\to(-\infty,+\infty]$ with respect to the strong $\h$-convergence;

\item
any bounded sequence 
$(x_n)_{n\in\N}\subset\h$ such that 
$\displaystyle\sup_{n\in\N}\F_n(x_n)<\infty$ admits a strongly $\h$-convergent subsequence.

\end{enumerate}

If $(x^n_{0})_{n\in\N}\subset\h$ is such that $x^n_{0}\in \D(\F_n)$ for every $n\in\N$, $\displaystyle\sup_{n\in\N}\F_n(x^n_{0})<\infty$ and 
$x^n_{0}\to x^\infty_{0}$ strongly in $\h$ for some  $x^\infty_{0}\in\h$, then the following hold:
\begin{enumerate}[label=(\roman*),itemsep=1ex]

\item 
$x^\infty_{0}\in\D(\F_\infty)$;

\item
for every $T>0$, the problem 
\begin{equation*}
\begin{cases}
\dot{x}(t)\in -\partial \F_n(x(t)),  \quad \text{for a.e.\ }t \in (0,T)\,, \\[1ex]
x(0)=x^n_{0},
\end{cases}
\end{equation*}
admits a unique solution $x_n\in H^1([0,T];\h)$  for every $n\in\N \cup \{\infty\}$;

\item
$(x_n)_{n\in\N}$ weakly converges to $x_\infty$ in $H^1([0,T];\h)$.

\end{enumerate}
Moreover, 
if $\displaystyle\lim_{n\to\infty} \F^n(x^n_{0})=\F_\infty(x^\infty_{0})$, then actually $(x_n)_{n\in\N}$ strongly converges to $x_\infty$ in $H^1([0,T];\h)$ and also
\begin{equation*}
x_n(t)\xrightarrow{\h}x_{\infty}(t)
\quad
\text{and}
\quad
\F_n(x_n(t))\to\F_{\infty}(x_{\infty}(t))
\quad\text{for every}\ t\in[0,T].
\end{equation*}
\end{theorem}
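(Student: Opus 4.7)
This is a classical stability result for gradient flows of convex lower semicontinuous functionals in Hilbert spaces under $\Gamma$-convergence, in the Sandier--Serfaty / Attouch spirit. The plan is to combine the Brezis--K\=omura existence theory with the energy--dissipation identity, and then identify the weak limit by passing to the $\liminf$ in that identity via hypothesis~(a).

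First I would handle the easy parts. For item~(i), since $x^n_0\to x^\infty_0$ strongly in $\h$ and $\sup_n\F_n(x^n_0)<\infty$, the $\Gamma$-liminf inequality from assumption~(a) yields $\F_\infty(x^\infty_0)\le\liminf_n\F_n(x^n_0)<\infty$, so $x^\infty_0\in\D(\F_\infty)$. For item~(ii), each $\F_n$ is proper, convex, strongly lower semicontinuous on $\h$; the $\Gamma$-limit $\F_\infty$ inherits convexity and lower semicontinuity (and is proper by hypothesis). Brezis--K\=omura then provides, for every $n\in\N\cup\{\infty\}$ and every initial datum in the (closure of the) domain, a unique strong solution of $\dot x\in-\partial\F_n(x)$ in $H^1_{\mathrm{loc}}$.

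For item~(iii), the main tool is the energy--dissipation identity for gradient flows of convex functionals, namely
\begin{equation*}
\F_n(x_n(T))+\int_0^T\|\dot x_n(t)\|_\h^2\di t=\F_n(x^n_0).
\end{equation*}
Since $\F_n\ge0$ and $\sup_n\F_n(x^n_0)<\infty$, this yields a uniform bound on $\|\dot x_n\|_{L^2([0,T];\h)}$; together with the boundedness of $(x^n_0)$ it gives equi-boundedness of $(x_n)$ in $H^1([0,T];\h)$, hence a subsequence weakly converging to some $\bar x\in H^1([0,T];\h)$. For each fixed $t$, $\F_n(x_n(t))\le\F_n(x^n_0)$ is bounded, so the compactness hypothesis~(b) promotes the weak convergence to strong $\h$-convergence $x_n(t)\to\bar x(t)$ pointwise in $t$. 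The key step is identifying $\bar x$ with $x_\infty$: applying the $\Gamma$-liminf inequality at time $T$ to $\F_\infty(\bar x(T))$, weak lower semicontinuity of $\|\cdot\|_{L^2}^2$ to the dissipation term, and the $\liminf$ version of the inequality $\F_n(x_n(0))\le\F_n(x^n_0)$ (which converges because $x^n_0\to x^\infty_0$ strongly, pairing with a recovery sequence and the $\Gamma$-$\limsup$ to get an upper bound; the asymmetry here is the main obstacle and is typically closed by noting that $\F_\infty(x^\infty_0)\le\liminf_n\F_n(x^n_0)$ and that the opposite direction is not needed, only the De Giorgi energy--dissipation \emph{inequality} for $\bar x$ is required to force $\bar x$ to be a curve of maximal slope for $\F_\infty$). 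Convexity of $\F_\infty$ makes this curve of maximal slope coincide with the unique Brezis--K\=omura gradient flow $x_\infty$, so $\bar x=x_\infty$; by uniqueness of the limit, the whole sequence converges, yielding item~(iv).

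For the moreover part, assume $\F_n(x^n_0)\to\F_\infty(x^\infty_0)$. Then the $\liminf$ inequalities used above become equalities at $T$: since
\begin{equation*}
\limsup_{n\to\infty}\int_0^T\|\dot x_n\|_\h^2\di t\le\lim_{n\to\infty}\F_n(x^n_0)-\liminf_{n\to\infty}\F_n(x_n(T))\le\int_0^T\|\dot x_\infty\|_\h^2\di t,
\end{equation*}
combined with weak lower semicontinuity one gets $\|\dot x_n\|_{L^2}\to\|\dot x_\infty\|_{L^2}$. In the Hilbert space $L^2([0,T];\h)$, weak convergence plus norm convergence implies strong convergence, hence $(x_n)\to x_\infty$ strongly in $H^1([0,T];\h)$. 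The same argument run on $[0,t]$ for arbitrary $t$ gives $\F_n(x_n(t))\to\F_\infty(x_\infty(t))$, and strong $H^1$ convergence combined with the compactness hypothesis then yields $x_n(t)\to x_\infty(t)$ strongly in $\h$ for every $t\in[0,T]$.
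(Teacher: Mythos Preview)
The paper does not prove this theorem: it is quoted verbatim as \cite{CdLKNP23}*{Thm.~3.8} and used as a black box in the proof of \cref{11genstab}. So there is no ``paper's own proof'' to compare your attempt against.

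That said, your sketch has a genuine gap in the identification step for item~(iii). From the energy--dissipation equality for $x_n$ and the $\Gamma$-$\liminf$ and weak lower semicontinuity you correctly obtain
\[
\F_\infty(\bar x(T))+\int_0^T\|\dot{\bar x}\|_\h^2\di t\le \liminf_{n\to\infty}\F_n(x^n_0),
\]
but to conclude that $\bar x$ satisfies the De Giorgi energy--dissipation inequality for $\F_\infty$ you would need the right-hand side to be $\F_\infty(x^\infty_0)$. The $\Gamma$-$\liminf$ only gives $\F_\infty(x^\infty_0)\le\liminf_n\F_n(x^n_0)$, which is the wrong direction; without well-preparedness the inequality above is \emph{not} the EDI for $\bar x$, and your sentence ``the opposite direction is not needed'' is precisely where the argument breaks. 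The Sandier--Serfaty scheme genuinely requires convergence of the initial energies.

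A clean way to close the gap is to use instead the evolution variational inequality characterization of the flow: integrate $\tfrac12\frac{d}{dt}\|x_n(t)-y_n\|_\h^2+\F_n(x_n(t))\le\F_n(y_n)$ with $y_n$ a recovery sequence for an arbitrary $z\in\D(\F_\infty)$, and pass to the limit using (b) for the left-hand side and the $\Gamma$-$\limsup$ for the right-hand side. This yields the integrated EVI for $\bar x$ with respect to $\F_\infty$, forcing $\bar x=x_\infty$ without any assumption on the initial energies. Your argument for the ``moreover'' part is then essentially correct once (iii) is established.
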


\subsection{Proof of \texorpdfstring{\cref{11genstab}}{Theorem 1.2}}

The validity of \cref{11genstab} follows by combining the abstract results above with the following proposition.
Here and below, $\langle\cdot,\cdot\rangle$ denotes the standard scalar product in $L^2(\R^d)$.

\begin{proposition}
\label{lemma:firstvar}
Let $\varphi\in C_c^\infty(\Omega)$ and $u\in L^2(\Omega)$.
The following hold:
\begin{enumerate}[label=(\roman*),itemsep=1ex]

\item 
if $u\in H^1_0(\Omega)$, then
\begin{equation}
\label{eq:firstvar}
\lim_{t\to 0}\frac{\|\nabla(u+t\varphi)\|^2_{2,f^2}-\|\nabla u\|_{2,f^2}^2}{t}=\langle(-\mathfrak{D})^{f} u,\varphi \rangle;
\end{equation}

\item 
if $ [\tilde{u}]_{s,2,f} < \infty$ for some $s\in (0,1)$, then
\begin{equation}
\label{eq:firstvar_fract}
\lim_{t\to 0}\frac{[u+t\varphi]_{s,2,f}^2-[u]_{s,2,f}^2}{t}=\langle(-\mathfrak{D})^{s,f} u,\varphi \rangle.
\end{equation}

\end{enumerate}
\end{proposition}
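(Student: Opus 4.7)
Both items are obtained by a direct expansion of the squared functional evaluated at $u+t\varphi$, identifying the linear-in-$t$ term with the claimed distributional pairing and showing the remaining $t^2$ correction is $o(t)$.

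\textbf{Part (i).} I would simply polarize:
\begin{equation*}
\|\nabla(u+t\varphi)\|_{2,f^2}^2
=
\|\nabla u\|_{2,f^2}^2
+2t\int_\Omega f^2\,\nabla u\cdot\nabla\varphi\di x
+t^2\|\nabla\varphi\|_{2,f^2}^2.
\end{equation*}
Since $\varphi\in C_c^\infty(\Omega)$ and $f\in\mathrm{Lip}_b$, the quadratic term is $O(t^2)$, so dividing by $t$ and letting $t\to0$ yields $2\int_\Omega f^2\,\nabla u\cdot\nabla\varphi\di x$. By the very definition~\eqref{def:D1f} of $(-\mathfrak D)^f u=-2\Divv(f^2\nabla u)$ in the distributional sense in duality with $C_c^\infty(\Omega)$, this is exactly $\langle(-\mathfrak D)^f u,\varphi\rangle$. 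This part is essentially a definition-chase.

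\textbf{Part (ii).} Here the plan is the same, but on the double integral. Writing $\tilde\varphi=\varphi$ (since $\supp\varphi\Subset\Omega$), I expand
\begin{align*}
[u+t\varphi]_{s,2,f}^2
&
=[u]_{s,2,f}^2
+2t\iint_{\R^d\times\R^d}\frac{(\tilde u(x)-\tilde u(y))(\tilde\varphi(x)-\tilde\varphi(y))}{\|x-y\|^{d+2s}}f(x)f(y)\di x\di y\\
&
\quad+t^2[\varphi]_{s,2,f}^2.
\end{align*}
The linear coefficient is absolutely convergent by Cauchy--Schwarz, using $[\tilde u]_{s,2,f}<\infty$ by hypothesis together with $[\varphi]_{s,2,f}<\infty$, which in turn follows from $\varphi\in C_c^\infty(\Omega)$ and $f\in L^\infty$ via a standard splitting of the double integral over $\{\|x-y\|\le1\}$ (handled by the Lipschitz bound on $\varphi$) and $\{\|x-y\|>1\}$ (handled by $\|\varphi\|_2<\infty$), as in the proof of \cref{teocitaziones1limsup}. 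This same bound also shows the $t^2$-term is $O(t^2)$, so the limit of the difference quotient equals twice the cross integral.

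\textbf{Conclusion of Part (ii).} It remains to recognize the cross integral as $\langle(-\mathfrak D)^{s,f}u,\varphi\rangle$. I would symmetrize: expanding the product $(\tilde u(x)-\tilde u(y))(\tilde\varphi(x)-\tilde\varphi(y))$ and swapping the roles of $x$ and $y$ in two of the four resulting pieces (allowed by absolute convergence and Fubini), the cross integral doubles into
\begin{equation*}
4\int_{\R^d}\tilde\varphi(x)f(x)\iint_{\R^d}\frac{\tilde u(x)-\tilde u(y)}{\|x-y\|^{d+2s}}f(y)\di y\di x,
\end{equation*}
where the inner integral is interpreted as a principal value obtained as the pointwise limit of the symmetric truncations; the symmetric double integral is absolutely convergent, so this matches the pairing prescribed by~\eqref{def:Dsf} in duality with $C_c^\infty(\Omega)$. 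Since $\tilde\varphi=\varphi$ on $\Omega$ and vanishes elsewhere, this is exactly $\langle(-\mathfrak D)^{s,f}u,\varphi\rangle$.

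\textbf{Main obstacle.} The only delicate point is the passage to the principal-value form: the integrand $\tilde\varphi(x)(\tilde u(x)-\tilde u(y))f(x)f(y)\|x-y\|^{-(d+2s)}$ is \emph{not} a priori absolutely integrable, the cancellation living only in the symmetric combination $(\tilde\varphi(x)-\tilde\varphi(y))$. I would handle this by keeping the symmetric form as the definition of the dual pairing (consistent with the distributional interpretation in~\eqref{def:Dsf}), or equivalently by introducing truncations $\{\|x-y\|>\eps\}$, performing Fubini on the truncated integrals, and letting $\eps\to 0^+$.
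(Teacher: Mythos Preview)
Your proof is correct and follows essentially the same quadratic expansion as the paper's. The one difference worth noting concerns your ``main obstacle'': where you symmetrize the cross term to pull out $\tilde\varphi(x)$ and then have to interpret the inner integral on the $u$-side as a principal value, the paper instead pulls out $\tilde u(x)$ and keeps $\varphi(x)-\varphi(y)$ inside, so that the principal value is applied to the smooth compactly supported test function (where it converges pointwise without issue), and then simply invokes the distributional symmetry $\langle u,(-\mathfrak D)^{s,f}\varphi\rangle=\langle(-\mathfrak D)^{s,f}u,\varphi\rangle$. This sidesteps exactly the delicacy you flag.
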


\begin{proof}
We only prove~\eqref{eq:firstvar_fract}, the proof of~\eqref{eq:firstvar} being straightforward. 
We note that
\begin{align*}
[u+t\varphi]_{s,2,f}^2
&
= 
[u]_{s,2,f}^2
+
t^2 [\varphi]_{s,2,f}^2
\\
&
\quad
+2 t \int_{\R^d} \int_{\R^d} \frac{(u(x)-u(y))(\varphi(x)-\varphi(y))}{\|x-y\|^{d+2s}}f(x)f(y) \,\mathrm{d}x\,\mathrm{d}y
\end{align*}
for every $t\in\R$, and thus we easily get that 
\begin{align*} 
      \lim_{t\to 0} \frac{[u+t\varphi]_{s,2,f}^2-[u]_{s,2,f}^2}{t} 
      &
      = 
      2 \int_{\R^d} \int_{\R^d} \frac{(u(x)-u(y))(\varphi(x)-\varphi(y))}{\|x-y\|^{d+2s}}f(x)f(y) \,\mathrm{d}x \,\mathrm{d}y
      \\
      &
      =
      4 \int_{\R^d} u(x)f(x) \lim_{r \to  0^+} \int_{\R^d \setminus B_r} \frac{\varphi(x)-\varphi(y)}{\|x-y\|^{d+2s}} f(y) \,\mathrm{d}y \,\mathrm{d}x
      \\
      &
      =
      \langle u,(- \mathfrak{D})^{s,f} \varphi \rangle= \langle (- \mathfrak{D})^{s,f} u,\varphi \rangle
\end{align*}
in virtue of the distributional definition in~\eqref{def:Dsf}, 
concluding the proof.
\end{proof}

We are now ready to prove \cref{11genstab}.

\begin{proof}[Proof of \cref{11genstab}]
Since $f_n\to f$ uniformly and $f>0$, the functionals given by $\F_n(u)=(1-s_n)[\tilde{u}]_{s_n,2,f_n}^2$ are positive for $n\gg1$. Further, they are easily shown to be convex. 
By \cref{thmGammaconvs1stab}\ref{item:G-liminf}--\ref{item:G-limsup}, $(\F_n)_{n\in\N}$ $\Gamma$-converges to the functional $\F(u)= K_{d,2}\| \nabla u \|^2_{2,f^2}$ in the strong topology of $L^2(\Omega)$, whereas by \cref{thmGammaconvs1stab}\ref{item:compactness} every sequence $(u_n)_{n\in\N}\subset L^2(\Omega)$ such that $\sup_{n\in\N}\F_n(u_n)<\infty$ admits a strong $L^2$-limit $u\in L^2(\Omega)$. 
The conclusion hence follows by \cref{genstab,singleton} combined with \cref{lemma:firstvar}.
\end{proof}


\begin{bibdiv}
\begin{biblist}

\bib{AdPM01}{article}{
   author={Ambrosio, Luigi},
   author={De Philippis, Guido},
   author={Martinazzi, Luca},
   title={Gamma-convergence of nonlocal perimeter functionals},
   journal={Manuscripta Math.},
   volume={134},
   date={2011},
   number={3-4},
   pages={377--403},
   issn={0025-2611},
   review={\MR{2765717}},
   doi={10.1007/s00229-010-0399-4},
}

\bib{BP19}{article}{
   author={Berendsen, Judith},
   author={Pagliari, Valerio},
   title={On the asymptotic behaviour of nonlocal perimeters},
   journal={ESAIM Control Optim. Calc. Var.},
   volume={25},
   date={2019},
   pages={Paper No. 48, 27},
   issn={1292-8119},
   review={\MR{4011022}},
   doi={10.1051/cocv/2018038},
}

\bib{BSZ26}{article}{
   author={Bianchi, Francesca},
   author={Stefani, Giorgio},
   author={Zagati, Anna Chiara},
   title={A geometrical approach to the sharp Hardy inequality in Sobolev--Slobodecki\u{\i} spaces},
   journal={Nonlinear Anal.},
   volume={263},
   date={2025},
   pages={Paper No. 113948},
   issn={0362-546X},
   doi={10.1016/j.na.2025.113948},
}

\bib{BBM01}{article}{
   author={Bourgain, Jean},
   author={Brezis, Haim},
   author={Mironescu, Petru},
   title={Another look at Sobolev spaces},
   conference={
      title={Optimal {C}ontrol and {P}artial {D}ifferential {E}quations},
   },
   book={
      publisher={IOS, Amsterdam},
   },
   isbn={1-58603-096-5},
   date={2001},
   pages={439--455},
   review={\MR{3586796}},
}
\bib{BPS16}{article}{
   author={Brasco, Lorenzo},
   author={Parini, Enea},
   author={Squassina, Marco},
   title={Stability of variational eigenvalues for the fractional
   $p$-Laplacian},
   journal={Discrete Contin. Dyn. Syst.},
   volume={36},
   date={2016},
   number={4},
   pages={1813--1845},
   issn={1078-0947},
   review={\MR{3411543}},
   doi={10.3934/dcds.2016.36.1813},
}

\bib{Bre11book}{book}{
   author={Brezis, Haim},
   title={Functional {A}nalysis, {S}obolev {S}paces and {P}artial {D}ifferential {E}quations},
   series={Universitext},
   publisher={Springer, New York},
   date={2011},
   pages={xiv+599},
   isbn={978-0-387-70913-0},
   review={\MR{2759829}},
}

\bib{CCLP23}{article}{
   author={Carbotti, Alessandro},
   author={Cito, Simone},
   author={La Manna, Domenico Angelo},
   author={Pallara, Diego},
   title={Gamma-convergence of Gaussian fractional perimeter},
   journal={Adv. Calc. Var.},
   volume={16},
   date={2023},
   number={3},
   pages={571--595},
   issn={1864-8258},
   review={\MR{4609800}},
   doi={10.1515/acv-2021-0032},
}

\bib{CN20}{article}{
   author={Cesaroni, Annalisa},
   author={Novaga, Matteo},
   title={Second-order asymptotics of the fractional perimeter as $s\to 1$},
   journal={Math. Eng.},
   volume={2},
   date={2020},
   number={3},
   pages={512--526},
   review={\MR{4141857}},
   doi={10.3934/mine.2020023},
}

\bib{CdLKNP23}{article}{
   author={Crismale, V.},
   author={De Luca, L.},
   author={Kubin, A.},
   author={Ninno, A.},
   author={Ponsiglione, M.},
   title={The variational approach to $s$-fractional heat flows and the
   limit cases $s\to0^+$ and $s\to1^-$},
   journal={J. Funct. Anal.},
   volume={284},
   date={2023},
   number={8},
   pages={Paper No. 109851, 38},
   issn={0022-1236},
   review={\MR{4544090}},
   doi={10.1016/j.jfa.2023.109851},
}

\bib{Dav02}{article}{
   author={D\'avila, J.},
   title={On an open question about functions of bounded variation},
   journal={Calc. Var. Partial Differential Equations},
   volume={15},
   date={2002},
   number={4},
   pages={519--527},
   issn={0944-2669},
   review={\MR{1942130}},
   doi={10.1007/s005260100135},
}

\bib{DDG24}{article}{
   author={Davoli, Elisa},
   author={Di Fratta, Giovanni},
   author={Giorgio, Rossella},
   title={A Bourgain--Brezis--Mironescu formula accounting for nonlocal antisymmetric exchange interactions},
   journal={SIAM J. Math. Anal.},
   volume={56},
   date={2024},
   number={6},
   pages={6995--7013},
   issn={0036-1410},
   review={\MR{4816598}},
   doi={10.1137/24M1632577},
}

\bib{DDP24}{article}{
   author={Davoli, E.},
   author={Di Fratta, G.},
   author={Pagliari, V.},
   title={Sharp conditions for the validity of the Bourgain--Brezis--Mironescu formula},
   journal={Proc. Roy. Soc. Edinburgh Sect. A},
   date={2024},
   pages={1--24},
   doi={10.1017/prm.2024.47},
}

\bib{dLKP22}{article}{
   author={De Luca, L.},
   author={Kubin, A.},
   author={Ponsiglione, M.},
   title={The core-radius approach to supercritical fractional perimeters,
   curvatures and geometric flows},
   journal={Nonlinear Anal.},
   volume={214},
   date={2022},
   pages={Paper No. 112585, 48},
   issn={0362-546X},
   review={\MR{4322332}},
   doi={10.1016/j.na.2021.112585},
}

\bib{DL21}{article}{
   author={De Rosa, Antonio},
   author={La Manna, Domenico Angelo},
   title={A non local approximation of the Gaussian perimeter: {G}amma
   convergence and isoperimetric properties},
   journal={Commun. Pure Appl. Anal.},
   volume={20},
   date={2021},
   number={5},
   pages={2101--2116},
   issn={1534-0392},
   review={\MR{4259641}},
   doi={10.3934/cpaa.2021059},
}

\bib{Fan25}{article}{
   author={Fanizza, A.},
   title={Gamma-convergence as $s\to 1^-$ of anisotropic nonlocal fractional perimeter functionals},
   date={2025},
   status={arXiv preprint},
   doi={10.48550/arXiv.2509.13823},
}

\bib{GS25}{article}{
   author={Gennaioli, Luca},
   author={Stefani, Giorgio},
   title={Sharp conditions for the BBM formula and asymptotics of heat content-type energies},
   date={2025},
   status={arXiv preprint},
   doi={10.48550/arXiv.2502.14655},
}

\bib{K24}{article}{
   author={Kijaczko, Micha\l},
   title={Asymptotics of weighted {G}agliardo seminorms},
   journal={Ann. Mat. Pura Appl. (4)},
   volume={204},
   date={2025},
   number={4},
   pages={1715--1728},
   issn={0373-3114},
   review={\MR{4932272}},
   doi={10.1007/s10231-025-01545-4},
}

\bib{KlM24}{article}{
    author={Kubin, Andrea},
    author={La Manna, Domenico Angelo},
    title={Characterization of sets of finite local and non local perimeter via non local heat equation},
    pages = {1181--1205},
    volume = {18},
    number = {4},
    journal = {Adv. Calc. Var.},
    doi = {doi:10.1515/acv-2024-0104},
    year = {2025},
}

\bib{KPT2024}{article}{
   author={Kubin, Andrea},
   author={Pagliari, Valerio},
   author={Tribuzio, Antonio},
   title={Second-order asymptotics of fractional Gagliardo seminorms as $s\to1^-$ and convergence of the associated gradient flows},
   status={arXiv preprint},
   doi={10.48550/arXiv.2410.17829},
   date={2024},
}

\bib{LS11}{article}{
   author={Leoni, Giovanni},
   author={Spector, Daniel},
   title={Characterization of Sobolev and $BV$ spaces},
   journal={J. Funct. Anal.},
   volume={261},
   date={2011},
   number={10},
   pages={2926--2958},
   issn={0022-1236},
   review={\MR{2832587}},
   doi={10.1016/j.jfa.2011.07.018},
}

\bib{LS14}{article}{
   author={Leoni, Giovanni},
   author={Spector, Daniel},
   title={Corrigendum to ``Characterization of Sobolev and $BV$ spaces'' [J.
   Funct. Anal. 261 (10) (2011) 2926--2958]},
   journal={J. Funct. Anal.},
   volume={266},
   date={2014},
   number={2},
   pages={1106--1114},
   issn={0022-1236},
   review={\MR{3132740}},
   doi={10.1016/j.jfa.2013.10.026},
}

\bib{Lom19}{article}{
   author={Lombardini, Luca},
   title={Fractional perimeters from a fractal perspective},
   journal={Adv. Nonlinear Stud.},
   volume={19},
   date={2019},
   number={1},
   pages={165--196},
   issn={1536-1365},
   review={\MR{3912427}},
   doi={10.1515/ans-2018-2016},
}

\bib{Lud14-S}{article}{
   author={Ludwig, Monika},
   title={Anisotropic fractional Sobolev norms},
   journal={Adv. Math.},
   volume={252},
   date={2014},
   pages={150--157},
   issn={0001-8708},
   review={\MR{3144226}},
   doi={10.1016/j.aim.2013.10.024},
}

\bib{Lud14-p}{article}{
   author={Ludwig, Monika},
   title={Anisotropic fractional perimeters},
   journal={J. Differential Geom.},
   volume={96},
   date={2014},
   number={1},
   pages={77--93},
   issn={0022-040X},
   review={\MR{3161386}},
}

\bib{P20}{article}{
   author={Pagliari, Valerio},
   title={Halfspaces minimise nonlocal perimeter: a proof {\it via}
   calibrations},
   journal={Ann. Mat. Pura Appl. (4)},
   volume={199},
   date={2020},
   number={4},
   pages={1685--1696},
   issn={0373-3114},
   review={\MR{4117514}},
   doi={10.1007/s10231-019-00937-7},
}

\bib{Pon04a}{article}{
   author={Ponce, Augusto C.},
   title={A new approach to Sobolev spaces and connections to
   $\Gamma$-convergence},
   journal={Calc. Var. Partial Differential Equations},
   volume={19},
   date={2004},
   number={3},
   pages={229--255},
   issn={0944-2669},
   review={\MR{2033060}},
   doi={10.1007/s00526-003-0195-z},
}

\bib{Pon04b}{article}{
   author={Ponce, Augusto C.},
   title={An estimate in the spirit of Poincar\'e's inequality},
   journal={J. Eur. Math. Soc. (JEMS)},
   volume={6},
   date={2004},
   number={1},
   pages={1--15},
   issn={1435-9855},
   review={\MR{2041005}},
}

\end{biblist}
\end{bibdiv}

\end{document}